\newtheorem{theorem}{Theorem}
\newtheorem{corollary}[theorem]{Corollary}
\newtheorem{proposition}[theorem]{Proposition}
\newtheorem{lemma}[theorem]{Lemma}
\theoremstyle{definition}
\newtheorem{definition}[theorem]{Definition}
\newtheorem{example}[theorem]{Example}
\newtheorem{question}[theorem]{Question}
\newcommand{\But}{\textrm{BH}}
\newcommand{\lcm}{\mathrm{LCM}}
\renewcommand{\Re}{\mathrm{Re}}
\newcommand{\abk}{\allowbreak}
\newcommand\nth{\textsuperscript{th}\xspace}
\title{\textbf{\Large{Morphisms of Butson classes}}}
\author{
\textsc{Ronan Egan}
				\thanks{\textit{E-mail: ronan.egan@math.uniri.hr}}\\
\textit{\footnotesize{Department of Mathematics}}\\
\textit{\footnotesize{University of Rijeka, Croatia}}\\
\textsc{Padraig \'O Cath\'ain}
				\thanks{\textit{E-mail: pocathain@wpi.edu}}\\
\textit{\footnotesize{Department of Mathematical Sciences}}\\
\textit{\footnotesize{Worcester Polytechnic Institute}}\\}
\begin{document}
\thispagestyle{empty}
\maketitle

\begin{abstract}
We introduce the concept of a morphism from the set of Butson Hadamard matrices over $k\nth$ roots of unity to the set of Butson matrices over $\ell\nth$ roots of unity.  As concrete examples of such morphisms, we describe tensor-product-like maps which reduce the order of the roots of unity appearing in a Butson matrix at the cost of increasing the dimension. Such maps can be constructed from Butson matrices with eigenvalues satisfying certain natural conditions.  Our work unifies and generalises Turyn's construction of real Hadamard matrices from Butson matrices over the $4\nth$ roots and the work of Compton, Craigen and de Launey on `unreal' Butson matrices over the $6\nth$ roots.  As a case study, we classify all morphisms from the set of $n \times n$ Butson matrices over $k\nth$ roots of unity to the set of $2n\times 2n$ Butson matrices over $\ell\nth$ roots of unity where $\ell < k$.
\footnote{2010 Mathematics Subject Classification: 05B20, 05B30, 05C50}\footnote{Keywords: Butson Hadamard matrix, morphism, eigenvalues}

\end{abstract}

\section{Introduction}

Let $M$ be an $n \times n$ matrix with entries in the complex numbers $\mathbb{C}$. If every entry $m_{ij}$ of $M$ has modulus bounded by $1$,
then Hadamard's theorem states that $|\det(M)| \leq n^{n/2}$. Hadamard himself observed that a matrix $M$ meets this bound with equality if and
only if every entry in $M$ has modulus $1$, and every pair of distinct rows of $M$ are orthogonal (with respect to the usual Hermitian inner product) \cite{Hadamard1893}. While Hadamard's name has become associated with real matrices meeting the bound, his original paper is not restricted to this case.

It is well known that a real Hadamard matrix of order $n$ (i.e. a matrix with entries in $\{\pm 1 \}$ meeting the Hadamard bound) can exist only if $n = 1, 2$ or $n \equiv 0 \mod 4$.
The Hadamard conjecture states that these necessary conditions are also sufficient. Since the discovery in 2005 of a real Hadamard matrix of order 428, the smallest
open case is $n = 668$ \cite{Kharaghani428}. Asymptotic existence results are also available in the real case: building on pioneering work of Seberry, Craigen has shown that
for any odd integer $m$, there exists a Hadamard matrix of order $2^{\alpha \log_{2}(m) + 2}m$ where $\alpha < 1$ is a constant \cite{CraigenAsymptotic,SeberryAsymptotic}.
Despite the existence of many constructions for Hadamard matrices, the density of integers $n$ for which it is known that there exists a Hadamard matrix of order $n$ is essentially
given by the density of the Paley matrices (i.e. the density of the primes, all other constructions contribute a higher order correction term) \cite{deLGordonAsymptotic}.

There is an analogue of the Hadamard conjecture for any number field: the usual Hadamard conjecture concerns the existence of Hadamard matrices over $\mathbb{Q}$.
Examples of matrices are known in which the entries are described as roots of modulus $1$ of certain polynomial equations, see matrix $A_{8}^{(0)}$ of the database \cite{Karol} for example. We will be exclusively interested in matrices whose entries are roots of unity in this paper. We write $M^{\ast}$ for the Hermitian transpose of $M$.

\begin{definition}
A matrix $H$ of order $n$ with entries in $\langle \zeta_{k}\rangle$ is \textit{Butson Hadamard} if $HH^{\ast} = n I_{n}$.
We write $\But(n,k)$ for the set of such matrices.
\end{definition}

There exists a large literature on real Hadamard matrices, which have close ties to symmetric designs, certain binary codes and difference sets \cite{BJL,HoradamHadamard}.
The Butson classes $\But(n,4)$ and $\But(n,6)$ have received some attention in the literature, due to applications in signal processing and connections
to real Hadamard matrices \cite{CCdeL,Turyn}. In fact, if $\But(n,4)$ is non-empty, then $\But(2n,2)$ is necessarily non-empty. This motivates the
\textit{complex Hadamard conjecture}: that there exists a Hadamard matrix of order $n$ with entries in $\{\pm 1, \pm i\}$ whenever $n$ is even.

More generally, de Launey and Winterhof have described necessary conditions for the existence of Hadamard matrices whose entries are roots of unity \cite{deLauneyExistence,WinterhofExistence}. A typical application of these results shows that $\But(n, 3^{a})$ is empty whenever the square-free part
of $n$ is divisible by a prime $p \equiv 5 \mod 6$. In contrast to $\But(n,2)$ and $\But(n,4)$, there does not appear to be any consensus in the
literature on sufficient conditions for $\But(n,k)$ to be non-empty for general values of $k$. Apart from the obvious examples of character tables of
abelian groups, relatively few constructions for matrices in $\But(n,k)$ are known. An early result of Butson shows that for all primes $p$, the set $\But(n,p)$
is empty unless $n \equiv 0 \mod p$, and that $\But(2p,p)$ is non-empty \cite{Butson}. Further constructions and a survey of known results are given by Agaian \cite{Agaian}.

\section{Morphisms of Butson matrices}

We define a (complete) morphism of Butson matrices to be a function $\But(n,k) \rightarrow \But(r,\ell)$.
The Kronecker product is perhaps the best-known example of a morphism: if $M \in \But(m,d)$ and
$H$ is any matrix in $\But(n,k)$ then $H \otimes M \in \But(mn, \ell)$ where $\ell$ is the least common
multiple of $k$ and $d$. To be explicit, the map $- \otimes M: \But(n,k) \rightarrow \But(mn,\ell)$
is a Butson morphism.

We will restrict our attention to morphisms which come from embeddings of matrix algebras (as the tensor product does),
such morphisms can be considered generalised plug-in constructions. In this case, we say a morphism $\But(n,k) \rightarrow \But(mn,\ell)$ is of degree $m$.  We will be particularly interested in the construction
of morphisms where $k$ does not divide $\ell$. We will also relax our conditions to allow \textit{partial morphisms},
where the domain is a proper subset of $\But(n,k)$; typically we impose a restriction on matrix entries of $H$.
We introduce the concept of a \textit{sound pair} to collect necessary and sufficient conditions for our main existence theorem.

We define $\zeta_{k} = e^{2\pi i/k}$, and set $G_{k} = \langle \zeta_{k} \rangle$.  We define $H^{\phi}$ to be the entrywise application of $\phi$ to $H$ whenever $\phi$ is a function defined on the entries of $H$, and we write $H^{(r)}$ for the function which replaces each entry of $H$ by its $r\nth$ power.

\begin{definition} \label{defn:sound}
Let $X,Y \subseteq G_{k}$ be fixed.
Suppose that $H \in \But(n,k)$ such that every entry of $H$ is contained in $X$,
and that $M \in \But(m,\ell)$ such that every eigenvalue of $\sqrt{m}^{-1}M$ is contained in $Y$.

Then the pair $(H, M)$ is $(X,Y)$-\textit{sound} if
\begin{enumerate}
\item For each $\zeta_{k}^{i} \in X$, we have $\sqrt{m}^{1-i} M^{i} \in \But(m, \ell)$.
\item For each $\zeta_{k}^{j} \in Y$, we have $H^{(j)} \in \But(n,k)$.
\end{enumerate}
We will often say that $(H,M)$ is a sound pair if there exist sets $X$ and $Y$ for which $(H, M)$ is $(X, Y)$-sound.
\end{definition}

Often the first condition of a sound pair is satisfied only when $i$ is odd: this occurs
in particular when $\mathbb{Q}[\zeta_{k}]$ contains no elements of absolute value $\sqrt{m}$.
But there do exist matrices satisfying this condition where $m$ is not a perfect square:
one example is given by the square of the matrix
\begin{equation}
\label{24throots}
M_{24} = \left[ \begin{array}{rr} 1 & 1 \\ i & -i \end{array}\right] ,\,\,\,\, \sqrt{2}^{-1}M_{24}^{2} =  \left[ \begin{array}{rr} \zeta_{8} & \zeta_{8}^7 \\ \zeta_{8} & \zeta_{8}^{3} \end{array}\right]
\end{equation}
which belongs to $\But(2,8)$. This example also shows that one should distinguish carefully between $\ell$ and the
degree of the cyclotomic field generated by the entries of $M$ in Definition \ref{defn:sound}. A natural question arises
about the smallest field containing the entries of $\sqrt{m}^{1-i}M^{i}$ for all $i$. Since we are interested exclusively
in Butson matrices, this will be the largest cyclotomic field contained in $\mathbb{Q}[\zeta_{\ell}, \sqrt{m}]$, which is at
most a quadratic extension of $\mathbb{Q}[\zeta_{\ell}]$, so its torsion units will be at most $2\ell\nth$ roots of unity if
$\ell$ is even and $4\ell\nth$ roots of unity if $\ell$ is odd.

Before we prove our main theorem, we recall that for an $n\times n$ matrix $A$, and an
$m \times m$ matrix $B$, the Kronecker products $A \otimes B$ and $B \otimes A$ are similar matrices.
There exists a permutation matrix $P_{mn}$ such that $P_{mn} (A\otimes B) P_{mn}^{-1} = B \otimes A$.
We call this matrix the \textit{Kronecker shuffle}. The locations of the non-zero entries in $P_{mn}$ can be precisely described.

\begin{proposition}[Rose, \cite{Rose}] \label{prop:blockdiag}
For any $m,n \in \mathbb{N}$, the $nm \times nm$ \textit{Kronecker shuffle} matrix $P_{mn}$ is
\[ P_{mn} = \left[ \delta^{im - \lfloor(i-1)/n\rfloor(mn-1)}_{j + m - 1} \right]_{1\leq i, j \leq mn}\,. \]
If $M$ is an $mn \times mn$ matrix consisting of $n\times n$ diagonal blocks,
then $P_{mn}M P_{mn}^{-1}$ has $m\times m$ blocks down the diagonal and is zero elsewhere.
\end{proposition}

We now show that the conditions of a sound pair are sufficient to guarantee that a plug-in construction
gives a Hadamard matrix.

\begin{theorem}\label{thm:main}
Let $H \in \But(n,k)$ and $M \in \But(m,\ell)$ be Hadamard matrices.
Define a map $\phi: \zeta_{k}^{i} \mapsto \sqrt{m}^{1-i} M^{i}$, and write
$H^{\phi}$ for the entrywise application of $\phi$ to $H$.
If $(H, M)$ is a sound pair then $H^{\phi} \in \But(mn,\ell)$.
\end{theorem}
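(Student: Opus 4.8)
The plan is to verify separately the two requirements for membership in $\But(mn,\ell)$: that every entry of $H^{\phi}$ is an $\ell\nth$ root of unity, and that $H^{\phi}(H^{\phi})^{\ast} = mn\, I_{mn}$. The entry condition follows immediately from the first soundness axiom. Writing $H = [\zeta_{k}^{e_{ab}}]$, every $\zeta_{k}^{e_{ab}}$ lies in $X$ by hypothesis, so each block $\phi(\zeta_{k}^{e_{ab}}) = \sqrt{m}^{\,1-e_{ab}} M^{e_{ab}}$ is a genuine element of $\But(m,\ell)$ and in particular has all entries in $\langle \zeta_{\ell}\rangle$. Assembling $H^{\phi}$ as an $n \times n$ array of such $m \times m$ blocks then shows all its entries lie in $\langle \zeta_{\ell}\rangle$. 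The real content is the orthogonality relation, and for this I would exploit the spectral hypothesis on $M$.

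Since $M \in \But(m,\ell)$, the normalised matrix $\sqrt{m}^{-1} M$ is unitary, hence unitarily diagonalisable: write $\sqrt{m}^{-1} M = U D U^{\ast}$ with $U$ unitary and $D = \mathrm{diag}(\lambda_{1},\dots,\lambda_{m})$, where each $\lambda_{t} = \zeta_{k}^{c_{t}} \in Y$ by hypothesis. The crucial observation is that the scaling built into $\phi$ exactly cancels the scaling coming from the eigenvalues: because $M^{i} = \sqrt{m}^{\,i}\, U D^{i} U^{\ast}$, we obtain
\[ \phi(\zeta_{k}^{i}) = \sqrt{m}^{\,1-i} M^{i} = \sqrt{m}\, U D^{i} U^{\ast}. \]
(In passing this makes $\phi$ manifestly periodic in $i$ modulo $k$, since $D^{k}=I$; so $Y \subseteq G_{k}$ is exactly what is needed for $\phi$ to be well defined.) Conjugating $H^{\phi}$ by the unitary matrix $V = I_{n} \otimes U$ therefore replaces the $(a,b)$ block by the \emph{diagonal} matrix $\sqrt{m}\, D^{e_{ab}}$, whose $t\nth$ diagonal entry is $\sqrt{m}\,\zeta_{k}^{c_{t} e_{ab}}$.

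Now $V^{\ast} H^{\phi} V$ is an $n \times n$ array of $m \times m$ diagonal blocks, precisely the configuration handled by the Kronecker shuffle of Proposition~\ref{prop:blockdiag} (with the roles of $m$ and $n$ interchanged). Applying the shuffle $P_{nm}$ to gather the like-indexed diagonal entries, the conjugate $P_{nm}(V^{\ast} H^{\phi} V) P_{nm}^{-1}$ becomes block diagonal, its $t\nth$ block being $\sqrt{m}\,[\zeta_{k}^{c_{t} e_{ab}}]_{a,b} = \sqrt{m}\, H^{(c_{t})}$. This is where the second soundness axiom enters: since $\lambda_{t} = \zeta_{k}^{c_{t}} \in Y$, we have $H^{(c_{t})} \in \But(n,k)$, so $\big(\sqrt{m}\, H^{(c_{t})}\big)\big(\sqrt{m}\, H^{(c_{t})}\big)^{\ast} = mn\, I_{n}$ for every $t$. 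Hence the block-diagonal matrix $B = \bigoplus_{t=1}^{m} \sqrt{m}\, H^{(c_{t})}$ satisfies $B B^{\ast} = mn\, I_{mn}$.

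Finally I would transport this relation back. Setting $Q = P_{nm} V^{\ast}$, which is unitary as a product of unitaries, we have $B = Q\, H^{\phi}\, Q^{\ast}$, and the relation $B B^{\ast} = mn\, I_{mn}$ is equivalent under this unitary conjugation to $H^{\phi}(H^{\phi})^{\ast} = mn\, I_{mn}$. Combined with the entry condition, this gives $H^{\phi} \in \But(mn,\ell)$. The step requiring genuine care — and the crux of the whole argument — is the recognition that after diagonalising $M$ the normalising factors collapse to a single $\sqrt{m}$ independent of $i$, so that the $t\nth$ spectral slice of $H^{\phi}$ is \emph{exactly} the power map $H^{(c_{t})}$; everything else is bookkeeping with unitary conjugations. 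It is worth emphasising that the two soundness axioms play genuinely distinct roles here: the first controls the entries, the second controls orthogonality, and neither implies the other.
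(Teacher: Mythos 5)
Your proof is correct, and its skeleton is the same as the paper's: diagonalise $M$, conjugate $H^{\phi}$ by $I_{n} \otimes (\text{diagonalising matrix})$, apply the Kronecker shuffle of Proposition \ref{prop:blockdiag}, recognise the $t$\nth diagonal block as $\sqrt{m}\,H^{(c_{t})}$, and invoke the second soundness axiom. The one genuine difference is your insistence on a \emph{unitary} diagonalisation $\sqrt{m}^{-1}M = UDU^{\ast}$ (legitimate, since $\sqrt{m}^{-1}M$ is unitary, hence normal), where the paper takes an arbitrary diagonalising matrix $A$. The paper's conjugations are therefore mere similarities: they preserve eigenvalues but not the Gram identity, so the paper must finish by showing every eigenvalue of $H^{\phi}$ has modulus $\sqrt{mn}$, which certifies Hadamardness only in combination with the unit-modulus entry condition and the equality case of Hadamard's determinant bound. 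Your version transports $BB^{\ast} = mn\,I_{mn}$ directly back through the unitary $Q = P_{nm}V^{\ast}$, so the orthogonality relation $H^{\phi}(H^{\phi})^{\ast} = mn\,I_{mn}$ is obtained without any appeal to determinants or spectra of non-normal matrices; this is a modest but real tightening. Two further small points in your favour: your remark that $D^{k} = I$ makes $\phi$ well defined (i.e.\ depends only on $i \bmod k$) addresses a point the paper passes over silently, and your block count of $m$ blocks of size $n \times n$ corrects the paper's indexing $\mathrm{diag}[B_{1},\ldots,B_{k}]$, where the subscript $k$ should be $m$.
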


\begin{proof}
Fix sets $X,Y \subseteq G_{k}$ such that $(H, M)$ is $(X, Y)$-sound.
By the first property of a sound pair, for every $\zeta_{k}^{i} \in X$,
the image $\phi(\zeta_{k}^{i}) = \sqrt{m}^{1-i} M^{i} \in \But(m, \ell)$. Hence
every entry in $H^{\phi}$ is in $\langle \zeta_{\ell} \rangle$. To show that $H^{\phi}$ is a Hadamard matrix, it will suffice
to show that every eigenvalue of $H^{\phi}$ has absolute value $\sqrt{mn}$.

Every Hadamard matrix is diagonalisable. Write $A$ for a matrix
such that $AMA^{-1}$ is diagonal, and define
$\psi(\zeta_{k}^{i}) = \sqrt{m}^{1-i} AM^{i}A^{-1}$.
Observe that if $\zeta_{k}^{\alpha}$ is the $t\nth$
diagonal entry of $AMA^{-1}$, then
$\psi(\zeta_{k}^{i})_{t,t} = \zeta_{k}^{i\alpha}$, where by hypothesis $\zeta_{k}^{\alpha} \in Y$.
Writing $H^{\psi}$ for the entrywise application of $\psi$ to $H$, it follows easily that
\[H^{\psi} = (I_{n} \otimes A) H^{\phi} (I_{n} \otimes A^{-1})\,. \]
Now, $H^{\psi}$ is an $mn \times mn$ block matrix in which each $m\times m$
block is diagonal. Applying Proposition \ref{prop:blockdiag}, we obtain a block diagonal matrix,
\[ P_{mn} H^{\psi} P_{mn}^{-1} = \textrm{diag} [ B_{1}, B_{2}, \ldots, B_{k}] \,.\]

To conclude, observe that $B_{t}[i,j] = \sqrt{m} \psi(h_{i,j})_{t,t} = \sqrt{m}h_{i,j}^{\alpha}$ with the notation chosen above.
So $B_{t} = \sqrt{m}H^{(\alpha)}$, and by the second property of a sound pair, each eigenvalue of $B_{t}$ has absolute value $\sqrt{mn}$.
This argument holds for each block $B_{t}$, so since $P_{mn}H^{\psi}P_{mn}^{-1}$ and $H^{\phi}$ are similar, we have $H^{\phi}\in \But(mn,\ell)$ as required.
\end{proof}

In the special case that every eigenvalue of $M$ is a primitive $k\nth$ root of unity, the second condition of Definition \ref{defn:sound} is vacuous:
raising each entry to its $\alpha\nth$ power for $\alpha$ coprime to $k$ is a field automorphism. Such an operation preserves both the modulus of entries of $H$ and the orthogonality of rows.
These are examples of \textit{global equivalence operations} as considered by de Launey and Flannery \cite{deLauneyFlannery}.
The first condition of Definition \ref{defn:sound} still places restrictions on the entries of $H$: it can never contain $1$, for example;
since the image of $\zeta_{k}^{0} = 1$ in $H^{\phi}$ never has entries of modulus $1$. Next we identify some sound pairs of
special interest, since they correspond to complete morphisms.

\begin{corollary}\label{cor:main}
Suppose that $M \in \But(m,\ell)$,
that all eigenvalues of $M$ are primitive
$k\nth$ roots of unity,
and that $\sqrt{m}^{1-i} M^{i}$ is
Hadamard for all $i$ coprime to $k$.
Let $d = \prod_{i=1}^{r} p_{i}^{\alpha_{i}}$ such that $p_{i}^{\alpha_{i}+1} \mid k$
for $1 \leq i \leq r$. Then there exists a complete Butson morphism $\But(n,d) \rightarrow \But(mn,\ell)$.
\end{corollary}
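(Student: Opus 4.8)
The plan is to exhibit one explicit complete morphism and verify it via Theorem \ref{thm:main}. The obvious candidate---applying the entrywise map $\phi$ of Theorem \ref{thm:main} directly to $H \in \But(n,d)$, viewing $G_{d} \subseteq G_{k}$ through $\zeta_{d} = \zeta_{k}^{k/d}$---fails at once: since $\phi(\zeta_{k}^{0}) = \sqrt{m}\,I_{m} \notin \But(m,\ell)$, every $H$ containing the entry $1$ is excluded, exactly as noted in the discussion preceding the statement. I would circumvent this by first multiplying $H$ by the scalar $\zeta_{k}$ and defining the morphism to be $\Phi \colon H \mapsto (\zeta_{k} H)^{\phi}$. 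Here $\zeta_{k} H \in \But(n,k)$, its entries being $\zeta_{k}\zeta_{d}^{a} = \zeta_{k}^{1 + a k/d}$, and this matrix never has the entry $1$. The content of the proof is then to check that $(\zeta_{k} H, M)$ is a sound pair, uniformly in $H$, so that Theorem \ref{thm:main} applies and yields $(\zeta_{k}H)^{\phi}\in\But(mn,\ell)$.

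To instantiate Definition \ref{defn:sound} I would take $X = \{\zeta_{k}^{1 + a k/d} : a \in \Z/d\Z\}$, which contains every entry of $\zeta_{k} H$ for every $H \in \But(n,d)$, and $Y$ the set of eigenvalues of $\sqrt{m}^{-1} M$, all primitive $k$\nth roots of unity by hypothesis. Condition~(2) is then essentially free: for $\zeta_{k}^{j} \in Y$ the exponent $j$ is coprime to $k$, so $x \mapsto x^{j}$ is a field automorphism of $G_{k}$ that maps $\But(n,k)$ to itself (it preserves both the moduli and the orthogonality relations, since $\overline{x^{j}} = \overline{x}^{\,j}$ for roots of unity), whence $(\zeta_{k} H)^{(j)} \in \But(n,k)$. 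Condition~(1) requires $\sqrt{m}^{1-i} M^{i} \in \But(m,\ell)$ for every $\zeta_{k}^{i} \in X$, and the hypothesis on $M$ supplies exactly this whenever $i$ is coprime to $k$; so the whole corollary reduces to the claim that every exponent $i = 1 + a k/d$ is coprime to $k$.

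This last claim is where the arithmetic hypothesis on $d$ does its work, and is the step to get right. From $p_{i}^{\alpha_{i}+1} \mid k$ and $v_{p_{i}}(d) = \alpha_{i}$ one gets $v_{p_{i}}(k/d) \geq 1$ for each prime $p_{i} \mid d$, while any prime dividing $k$ but not $d$ trivially divides $k/d$; hence \emph{every} prime dividing $k$ also divides $k/d$, i.e. $\mathrm{rad}(k) \mid (k/d)$. Consequently, for any prime $p \mid k$ one has $a k/d \equiv 0 \pmod{p}$, so $1 + a k/d \equiv 1 \not\equiv 0 \pmod{p}$ for every $a$; thus no prime of $k$ divides $1 + a k/d$, that is $\gcd(1 + a k/d,\, k) = 1$. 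Every element of $X$ is therefore a primitive $k$\nth root of unity, Condition~(1) holds by hypothesis, and $(\zeta_{k} H, M)$ is $(X,Y)$-sound. Theorem \ref{thm:main} then gives $(\zeta_{k} H)^{\phi} \in \But(mn,\ell)$ for every $H \in \But(n,d)$, so $\Phi$ is the desired complete degree-$m$ morphism. The one genuine obstacle is the radical condition $\mathrm{rad}(k)\mid(k/d)$ extracted above; once it is in hand, everything else is a routine repackaging of Theorem \ref{thm:main}.
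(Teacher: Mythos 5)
Your proposal is correct and follows essentially the same route as the paper: multiply $H$ by $\zeta_{k}$ so that its entries become primitive $k$\nth roots of unity, take $X$ (a translate of the $d$\nth roots) and $Y$ inside the primitive $k$\nth roots, and invoke Theorem \ref{thm:main}. The only difference is that you spell out the arithmetic fact the paper asserts in one line --- that $p_{i}^{\alpha_{i}+1}\mid k$ forces $\mathrm{rad}(k)\mid (k/d)$, so every exponent $1+ak/d$ is coprime to $k$ --- which is a worthwhile elaboration but not a different proof.
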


\begin{proof}
Under the hypotheses, the primitive $k\nth$ roots of unity contain a translate of the $d\nth$ roots of unity.
If $H \in \But(n,d)$ then every entry of $\zeta_{k}H$ is a primitive $k\nth$ root. The conditions of
Definition \ref{defn:sound} are satisfied with $X$ and $Y$ both taken to be the
primitive $k\nth$ roots of unity. The claim follows from Theorem \ref{thm:main}.
\end{proof}

The following construction of Turyn illustrates Corollary \ref{cor:main}.

\begin{example}[Turyn, \cite{Turyn}]
\label{TurynEx}
Let $M_{8}$ be the matrix
\[ M_{8} = \left[ \begin{array}{rr} 1 & 1 \\ -1 & 1 \end{array}\right]\,.\]
It is easily verified that the eigenvalues of $M_{8}$ are $\sqrt{2} \zeta_{8}$
and $\sqrt{2}\zeta_{8}^{7}$. Likewise, it can be verified that
\[ \frac{1}{2} M_{8}^{3} =  \left[ \begin{array}{rr} -1 & 1 \\ -1 & -1 \end{array}\right] \,\]
that $2^{-2}M_{8}^{5} = -M_{8}$ and that $2^{-3}M_{8}^{7} = -M_{8}^{3}$. (In fact $M_{8}^{8} = 16I_{2}$.)
Thus $M_{8}$ satisfies all the conditions of Corollary \ref{cor:main} with $m = 2$ and $\ell = 8$.
So for any $H \in \But(n,4)$, the pair $(\zeta_{8} H, M_{8})$ is sound. We recover Turyn's famous
morphism: $\But(n,4) \rightarrow \But(2n,2)$.
\end{example}

Next, we illustrate the full generality of Theorem \ref{thm:main}.

\begin{example}[Compton, Craigen, de Launey \cite{CCdeL}]
\label{CCdeL ex}
Let $M_{6}$ be the following matrix (which is in fact similar to the matrix given by Compton, Craigen and de Launey):
\[ M_{6} = \left[ \begin{array}{rrrr} 1 & 1 & 1 & 1 \\ -1 & 1 & -1 & 1 \\ -1 & 1 & 1 & -1 \\ -1 & -1 & 1 & 1 \end{array}\right]\,. \]
One computes that the eigenvalues of $2^{-1}M_{6}$ are the primitive sixth roots of unity, each with multiplicity $2$.
Likewise, one can check that $2^{-1}M_{6}^{2}$ is Hadamard and that $M_{6}^{3} = -8I_{4}$. As a result, $2^{-3}M_{6}^{4}$ and $2^{-4}M_{6}^{5}$ are Hadamard.

In the definition of a sound pair, we can take
$X = \{\zeta_{6}, \zeta_{6}^{2}, \zeta_{6}^{4}, \zeta_{6}^{5}\}$ and
$Y = \{\zeta_{6}, \zeta_{6}^{5} \}$. Since raising each entry to its first power is the identity map on
$\langle \zeta_{6}\rangle$ and raising an entry to its $5\nth$ power is complex conjugation,
the restrictions placed on $H$ by $Y$ are vacuous. Since $M_{6}^{3}$ is a scalar matrix,
we cannot allow $-1$ as an entry in $H$. Compton, Craigen and de Launey call a matrix in $\But(n,6)$ with
entries in $X$ \textit{unreal}. We have constructed a partial morphism: $\But(n,6) \rightarrow \But(4n,2)$
with domain the unreal matrices.
\end{example}

We conclude this section with a question which we feel should have a positive answer.

\begin{question}\label{coprime Q}
If the eigenvalues of $M \in \But(m, \ell)$ are all primitive $k\nth$ roots of unity, is it true that $\sqrt{m}^{1-i}M^i \in \But(m, \ell)$ for all $i$ coprime to $k$?
\end{question}

\section{Construction of Butson morphisms} \label{sec:partial}

\subsection{Partial morphisms}

We begin by describing a relationship between certain sets of mutually unbiased bases and partial morphisms of Hadamard matrices.

\begin{definition}
Let $V$ be an $n$-dimensional vector space over $\mathbb{C}$ carrying the usual Hermitian inner product.
Two orthonormal bases $B_{1}$ and $B_{2}$ of $V$ are \textit{unbiased} if $|\langle u,v \rangle| = n^{-1/2}$
for all $u \in B_{1}$ and $v \in B_{2}$. A set of bases is \textit{mutually unbiased} if each pair is
unbiased.
\end{definition}

We refer to sets of mutually unbiased bases as MUBs. Such objects are of substantial interest in quantum physics,
and are well studied in the literature. Normalising so that $B_{0}$ is the standard normal basis, every other $B_{i}$
is necessarily represented by a Hadamard matrix. Being mutually unbiased means that $B_{i}B_{j}^{\ast}$ is again Hadamard
for all $i \neq j$. The largest possible number of MUBs in $\mathbb{C}^{d}$ is $d+1$. Maximal sets of MUBs are known to exist in
every prime power dimension, and in no other dimensions. It is believed that the maximal number of MUBs in dimension $6$ is three,
but this is an outstanding open problem \cite{MUBSsurvey}.

To construct sound pairs, we require a Hadamard matrix $M$ for which a specified set of its powers are Hadamard. Since $M^{i} (M^{j})^{\ast} = M^{i-j}$,
a subset $\mathcal{B}$ of the powers of $M$ is a set of MUBs if and only if $M^{i-j}$ is Hadamard for all $M^{i}, M^{j} \in \mathcal{B}$.
A remarkable construction of Gow yields Butson matrices with the property that every power is either Hadamard or scalar.
The construction uses the representation theory of extra-special $p$-groups of exponent $p$ over $\mathbb{C}$ for odd $p$, these are the so-called discrete Heisenberg groups. (For $p = 2$, Gow uses generalised quaternion groups.)

\begin{theorem}[Gow, \cite{GowEasy, GowHard}]\label{gow}
Let $q = 2^{a}$ such that $q+1$ is prime.
There exists $M_{q} \in \But(q, 4)$ and $M'_{q} \in \But(q^{2}, 2)$
such that $M_{q}$ has only primitive $(q+1)^{\textrm{st}}$ roots of unity
as eigenvalues.
\end{theorem}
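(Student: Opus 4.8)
The plan is to realise $M_q$ as a ``Fourier-like'' operator attached to a finite Heisenberg-type $2$-group and then to read off its eigenvalues from the order of the group element it represents. Concretely, I would work with the extraspecial $2$-group $E = 2^{1+2a}_{-}$ of minus type and order $2^{2a+1}$ (a central product of quaternion and dihedral groups of order $8$), whose unique faithful complex irreducible representation $\rho$ has degree exactly $2^{a}=q$. Writing $V = E/Z(E) \cong \F_2^{2a}$ for the Frattini quotient, the squaring map $v\mapsto v^2\in Z(E)$ endows $V$ with a nondegenerate quadratic form of minus type, so that $\mathrm{Out}(E)$ contains the orthogonal group $\mathrm{O}^{-}(2a,2)$. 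In the standard generalised Pauli basis indexed by $\F_q$ the operators $\rho(e)$ are monomial matrices with entries in $\{1,i,-1,-i\}$ (the genuine fourth roots reflect that minus type is quaternionic); this is what will force the output into $\But(\,\cdot\,,4)$.

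First I would produce a group element of the correct order. Identifying $V\cong\F_{q^2}$ as an $\F_2$-space, the norm-one subgroup of $\F_{q^2}^{\times}$ is cyclic of order $q+1$ and acts by multiplication as a fixed-point-free group of isometries; this realises a ``Singer'' torus $\langle\sigma\rangle\le\mathrm{O}^{-}(2a,2)\le\mathrm{Out}(E)$ with $\sigma$ of order $q+1$. Since the normaliser (Clifford group) of $\rho(E)$ in the unitary group surjects onto $\mathrm{Out}(E)$, I may pick a unitary lift $\widetilde\sigma$ with $\widetilde\sigma\,\rho(e)\,\widetilde\sigma^{-1}=\rho(\sigma e)$ for all $e\in E$, and set $M_q=\sqrt{q}\,\widetilde\sigma$ after fixing the free scalar. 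Then $M_q M_q^{\ast}=qI_q$ is automatic from unitarity; the substantive point, and the step I expect to be the main obstacle, is to show that \emph{every} entry of $M_q$ is a fourth root of unity. This is a finite-field Gauss-sum computation: in the Pauli basis the matrix of a torus element of the Weil representation has entries of the shape $q^{-1/2}$ times a quadratic Gauss sum over $\F_q$ in characteristic $2$, and one must verify these evaluate uniformly to $\pm1,\pm i$. Establishing this uniform unimodularity, equivalently that $\widetilde\sigma$ is unbiased with the standard basis, is the technical heart of Gow's argument.

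With $M_q\in\But(q,4)$ in hand, the eigenvalues come out cleanly. Because $\sigma$ has order $q+1$, the operator $\widetilde\sigma^{\,q+1}$ centralises the irreducible $\rho(E)$ and so is scalar by Schur's lemma; rescaling I may assume $\widetilde\sigma^{\,q+1}=I_q$, so every eigenvalue of the normalised matrix $q^{-1/2}M_q$ is a $(q+1)$-st root of unity. The entries of $q^{-1/2}M_q$ lie in $\C$-subfield $\mathbb{Q}(\zeta_8)$, which is linearly disjoint from $\mathbb{Q}(\zeta_{q+1})$ over $\mathbb{Q}$ since $q+1=2^a+1$ is odd. Hence the subgroup of $\mathrm{Gal}(\mathbb{Q}(\zeta_{8(q+1)})/\mathbb{Q})$ fixing $\mathbb{Q}(\zeta_8)$, which maps isomorphically onto $(\Z/(q+1))^{\times}$ by disjointness, fixes the characteristic polynomial and permutes the primitive $(q+1)$-st roots transitively. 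As $q+1$ is prime, the primitive roots are exactly the $q$ nontrivial ones, so they occur with a single common multiplicity $\mu$ while $1$ occurs with multiplicity $\nu$, where $q\mu+\nu=q$. Since $\widetilde\sigma$ induces the nontrivial $\sigma$ on $E$ it is not the identity, ruling out $\mu=0$; hence $\mu=1$ and $\nu=0$, and the eigenvalues are precisely the $q$ primitive $(q+1)$-st roots of unity, each once, as claimed.

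Finally, $M_q'\in\But(q^2,2)$ comes from the real analogue of this construction, carried out in dimension $q^2$ with a generalised quaternion $2$-group (as flagged in the statement) in place of $E$ and its real, quaternionic-type representation. Realifying the complex picture makes the corresponding Fourier-like operator orthogonal with all entries in $\{\pm1\}$, producing a real Hadamard matrix of the stated order $q^2$. The verification that the entries are real and unimodular is the parallel real Gauss-sum calculation, which I would carry out in the same manner; since the theorem imposes no eigenvalue condition on $M_q'$, no further analysis is needed there.
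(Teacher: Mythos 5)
Your proposal and the paper's proof have almost complementary scopes, and the part you add beyond the paper's is where the trouble lies. The paper does not reconstruct Gow's matrices at all: it cites Gow for their existence together with two auxiliary facts (the trace of $q^{-1}M'_{q}$ is $-q$; the unitary matrix $\sqrt{q}^{-1}M_{q}$ has multiplicative order $q+1$), and its actual content is the deduction that all eigenvalues are \emph{primitive} $(q+1)^{\textrm{st}}$ roots of unity, via irreducibility of $\Phi_{q+1}(x)$ over $\mathbb{Q}$ (real case, with a trace-and-degree count) and over $\mathbb{Q}[i]$ (complex case). Your Galois-orbit argument is in substance the same deduction --- linear disjointness of $\mathbb{Q}(\zeta_{8})$ and $\mathbb{Q}(\zeta_{q+1})$ is precisely why $\Phi_{q+1}$ stays irreducible over the entry field --- so that part is fine, and arguably cleaner than the Eisenstein-type argument the paper invokes. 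But you also undertake to rebuild the construction itself (extraspecial minus-type $2$-group, Clifford group, Singer torus; note the paper states that Gow actually uses generalised quaternion groups when $p=2$), and there you explicitly leave unproved the one claim that makes the construction a theorem: that the entries of $\sqrt{q}\,\widetilde\sigma$ are fourth roots of unity, and $\pm 1$ in the real case. Deferring ``the technical heart of Gow's argument'' means the existence half of the statement is simply not established by your text.

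There is also a genuine logical gap independent of that deferral: you normalise the free scalar twice, in incompatible ways. First you fix it so that $M_{q} = \sqrt{q}\,\widetilde\sigma \in \But(q,4)$; later you rescale again (``rescaling I may assume $\widetilde\sigma^{\,q+1} = I_{q}$'') to force the order to be exactly $q+1$. The second rescaling multiplies every entry by a root of unity $\omega$ with $\omega^{q+1} = c^{-1}$, where $c$ is the Schur scalar; $\omega$ need not be a fourth root of unity, so the rescaled matrix need not lie in $\But(q,4)$, and its entries need not lie in $\mathbb{Q}(\zeta_{8})$ --- which is exactly the hypothesis your Galois argument requires. This is not pedantry: the paper warns immediately after the theorem that for $q=2$ Gow's unitary matrix has order $3$ while the corresponding Hadamard matrix has normalised order $24$, and that $2^{-3/2}M^{3} = I_{2}$ forces $8 \mid \ell$; so for $q=2$ the two normalisations you impose simultaneously are provably irreconcilable, and indeed no matrix in $\But(2,4)$ has both primitive cube roots of unity as normalised eigenvalues. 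A correct argument must control the scalar $c = \widetilde\sigma^{\,q+1}$ attached to the $\But(q,4)$-normalised lift --- for instance, show $c$ is itself a fourth root of unity, so that coprimality of $4$ and $q+1$ lets you absorb it by a fourth-root-of-unity rescaling --- and this is where the hypothesis that $\sqrt{q}$ is an integer (equivalently $q+1>3$, as in Corollary \ref{Fermat}) must enter. Your write-up never confronts this point, which is precisely the subtlety the paper spends a paragraph on after the theorem.
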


\begin{proof}
Since $M'_{q} \in \But(q^{2}, 2)$, every eigenvalue of $q^{-1}M'_{q}$
is a complex number of modulus $1$. Since $q+1$ is prime, the eigenvalues
are even $(q+1)^{\textrm{st}}$ roots of unity (though not necessarily all primitive).
Gow proves that the trace of $q^{-1}M'_{q}$ is $-q$. Together with the irreducibility of the
cyclotomic polynomials over $\mathbb{Q}$, this shows that every eigenvalue of $q^{-1}M'_{q}$
is a primitive $(q+1)^{\textrm{st}}$ root of unity.

The proof is similar in the complex case: again, the matrix $\sqrt{q}^{-1}M_{q}$ is of multiplicative order $q+1$.
So the eigenvalues are $(q+1)^{\textrm{st}}$ roots of unity, and the minimal polynomial divides the cyclotomic polynomial
$\Phi_{q+1}(x)$. Factoring $q+1$ over the Gaussian integers and applying a generalisation of Eisenstein's criterion,
shows that $\Phi_{q+1}(x)$ remains irreducible over $\mathbb{Q}[i]$, so every eigenvalue of $\sqrt{q}^{-1}M_{q}$ is again a
$(q+1)^{\textrm{st}}$ root of unity.
\end{proof}

The smallest example of Gow's theorem is in dimension $2$, where Gow gives the matrix
\[ 2^{-1/2}\zeta_{8}M = \frac{1+i}{2}  \left[ \begin{array}{rr} -1 & i\\ 1 & i \end{array}\right] \,.\]
Some care is necessary in interpreting the matrices of Theorem \ref{gow} as partial morphisms, however.
While Gow's unitary matrix is defined over $\mathbb{Q}[i]$ and has order $3$, the corresponding Hadamard matrix
(denoted $M$ above) has order $24$. In fact, if there exists $M \in \But(2, \ell)$ such that $2^{-3/2}M^{3} = I_{2}$
then $8 \mid \ell$. Interestingly, applying the Turyn morphism to $\zeta^{5}_{8}M$ yields a matrix in $\But(4, 2)$ which is
similar to the Compton-Craigen-de Launey matrix of Example \ref{CCdeL ex}. For a Fermat prime $p > 3$, it is well known that $p-1$
is a perfect square, and we do not need to pass to a larger field.

\begin{corollary}\label{Fermat}
Suppose that $p = 2^a + 1$ is a Fermat prime, $p> 3$. Then there exists $M_{p} \in \But(2^a, 4)$ and $M'_{p} \in \But(2^{2a}, 2)$ with only primitive $p\nth$ roots as eigenvalues.
Hence, for any $H \in \But(n,p)$ with no entries equal to $1$, the pairs $(H, M_{p})$ and $(H, M'_{p})$ are sound.
\end{corollary}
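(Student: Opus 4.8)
The plan is to read off the existence of $M_p$ and $M'_p$ directly from Gow's Theorem \ref{gow}, and then to verify soundness by exhibiting explicit sets $X$ and $Y$ and checking the two conditions of Definition \ref{defn:sound}. For the existence, set $q = 2^a$; the hypothesis that $p = 2^a + 1$ is prime is exactly the requirement that $q+1$ be prime in Theorem \ref{gow}, so Gow furnishes $M_p := M_q \in \But(2^a,4)$ and $M'_p := M'_q \in \But(2^{2a},2)$ whose normalised eigenvalues are all primitive $p\nth$ roots of unity.

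For the soundness claim I would take both $X$ and $Y$ to be the full set of primitive $p\nth$ roots of unity. Because $p$ is prime, every $p\nth$ root of unity other than $1$ is primitive, so the hypothesis that $H$ has no entry equal to $1$ is precisely the statement that all entries of $H$ lie in $X$, while the eigenvalue hypothesis on $M_p, M'_p$ places their normalised eigenvalues in $Y$. The second condition of Definition \ref{defn:sound} is then automatic: for $j$ coprime to $p$ the map $\zeta_p \mapsto \zeta_p^j$ is a Galois automorphism of $\mathbb{Q}[\zeta_p]$, hence $H^{(j)} \in \But(n,p)$. The useful computational observation is that if $U = \sqrt{m}^{-1}M$ denotes the normalised (unitary) matrix, then $\sqrt{m}^{1-i}M^i = \sqrt{m}\,U^i$ for every $i$, so the first condition of Definition \ref{defn:sound} reduces to showing that $\sqrt{m}\,U^i$ is Butson Hadamard over the appropriate roots of unity for each $i$ coprime to $p$.

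The crux is this first condition, and it draws on two inputs. First, since $U$ has order $p$ with normalised eigenvalues running over all primitive $p\nth$ roots, for $i$ coprime to $p$ the eigenvalues of $U^i$ are again all the primitive $p\nth$ roots and in particular are not all equal, so $U^i$ is never scalar; Gow's construction guarantees that every non-scalar power of $M$ is Hadamard, which supplies both the orthogonality $(\sqrt{m}\,U^i)(\sqrt{m}\,U^i)^{\ast} = mI$ and root-of-unity entries. Second, I must check that these entries are $\ell\nth$ roots rather than $2\ell\nth$ roots, i.e.\ that we stay inside $\But(m,\ell)$; this is where the Fermat hypothesis enters. For $p>3$ the exponent $a$ is itself a power of $2$ and hence even, so $m = 2^a$ is a perfect square and $\sqrt{m}\in\mathbb{Q}$ (and $m = 2^{2a}$ is trivially a perfect square for $M'_p$). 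Thus no quadratic irrationality is introduced when $i$ is even, the entries of $\sqrt{m}^{1-i}M^i$ remain in $\mathbb{Q}[\zeta_\ell]$, and the first condition of Definition \ref{defn:sound} holds with the stated $\ell$.

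The main obstacle is the first condition of soundness, and essentially all of its difficulty is imported from Gow's theorem: the assertion that every non-scalar power of $M_q$ or $M'_q$ is Hadamard is a genuinely representation-theoretic fact (the powers represent the elements of a cyclic subgroup of a discrete Heisenberg or generalised quaternion group), and does not follow from the eigenvalue condition alone. Once that input is granted, the remaining work is the elementary but essential number-theoretic observation that $p-1$ is a perfect square, which keeps the normalised powers defined over $\langle \zeta_\ell \rangle$ and so certifies genuine membership in $\But(m,\ell)$.
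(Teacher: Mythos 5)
Your proposal is correct and follows essentially the same route the paper intends: existence comes straight from Theorem \ref{gow}, soundness takes $X = Y$ to be the primitive $p$\nth roots (so the second condition of Definition \ref{defn:sound} is vacuous by the Galois-automorphism remark after Theorem \ref{thm:main}), and the first condition combines Gow's every-power-is-Hadamard-or-scalar property with the observation that $p-1 = 2^a$ is a perfect square when $p > 3$, so no passage to a larger field is needed. The paper leaves this argument implicit in the discussion surrounding the corollary, but your write-up matches it point for point, including the correct identification that the Hadamard-or-scalar fact is imported from Gow's construction rather than following from the eigenvalue hypothesis alone (which is exactly the open Question \ref{coprime Q}).
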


Gow has also constructed large sets of MUBs as powers of a single matrix in odd prime power dimensions. But when $q+1$ is not prime, the conditions imposed by Definition
 \ref{defn:sound} on $H$ may be impossible to satisfy. Gow's $5 \times 5$ matrix has eigenvalues $\{-1, \pm \omega, \pm \omega^{2}\}$,
 which require that $H^{(2)}$ and $H^{(3)}$ both be Hadamard. But this implies that $H$ is generalised Hadamard over
 the cyclic group of order $6$: generalised Hadamard matrices have been constructed only over $p$-groups, and it has been conjectured
that no such matrices exist over groups of composite order. (See Section 2.10 of \cite{deLauneyFlannery} for the definition of generalised Hadamard matrices, and
\cite{Brock} for some non-existence results.)

\begin{example}
It seems rather difficult to realise Gow's work as an explicit construction for partial morphisms.
We constructed several examples computationally. The follows matrix has as eigenvalues the primitive fifth roots of unity.
\[ M_{5} = \left[ \begin{array}{rrrr} -1 & -1 & -1 & -1 \\ 1 & -1 & 1 & -1 \\ i & i & -i & -i \\ i & -i & -i & i \end{array}\right]\,.\]
Equivalently, $M_{5}$ induces a partial morphism $\But(n, 5)\rightarrow \But(4n, 4)$ where the domain consists of matrices have no entries
equal to $1$. Allowing negations, we obtain a map $\But(n, 10) \rightarrow \But(4n, 4)$ with domain the unreal matrices (i.e. those containing no real entries).
\end{example}

These morphisms generalise the Compton-Craigen-de Launey result which is the case $a = 1$ of Corollary \ref{Fermat}.

\subsection{New morphisms from old}\label{secnew}

Suppose that there is a complete morphism $\But(n,d) \rightarrow \But(nm,\ell)$ where $M$, $d$ and $k$ are defined as in Corollary \ref{cor:main}.
The next result allows us to construct new morphisms over larger roots of unity.

\begin{theorem}\label{thm:newfromold}
If $(\zeta_{k}H,M)$ is $(X,Y)$-sound for all $H \in \But(n,d)$ where $Y$ consists only of primitive $k\nth$ roots of unity, then $(\zeta_{kt}H, \zeta_{t}M)$ is sound for any $t$ coprime to $k$.
\end{theorem}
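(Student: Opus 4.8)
The plan is to exhibit explicit sets $X'$ and $Y'$ witnessing soundness of $(\zeta_{kt}H, \zeta_t M)$, and then to verify the two conditions of Definition \ref{defn:sound} by transporting the corresponding facts for $(\zeta_{k}H, M)$. I take $X'$ to be the set $\zeta_{kt}\langle \zeta_d\rangle$ of entries occurring in $\zeta_{kt}H$ as $H$ ranges over $\But(n,d)$, and $Y'$ to be the set of eigenvalues of $\sqrt{m}^{-1}\zeta_t M$. Before checking the two conditions I must confirm that both of these sets consist of primitive $kt\nth$ roots of unity, so that the definition applies to $(\zeta_{kt}H, \zeta_t M)$ with $k$ replaced by $kt$ and $\ell$ replaced by $\ell' = \lcm(\ell, t)$; here $\zeta_t M \in \But(m, \ell')$ since scaling a Hadamard matrix by the unit-modulus scalar $\zeta_t$ preserves the Hadamard property and only enlarges the field of entries to $\mathbb{Q}[\zeta_{\ell'}]$.

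First I would record the two root-of-unity computations, both of which use $\gcd(t,k) = 1$ together with $d \mid k$. Writing a typical entry of $\zeta_{kt}H$ as $\zeta_{kt}^{1 + (kt/d)b}$, a short valuation argument shows that every prime dividing $kt$ already divides $kt/d$: a prime dividing $k$ divides $k/d$ by the defining condition $p_i^{\alpha_i + 1} \mid k$ imposed on $d$ in Corollary \ref{cor:main}, while a prime dividing $t$ divides $t$, hence $kt/d$. Thus $1 + (kt/d)b$ is coprime to $kt$ and the entries of $\zeta_{kt}H$ are primitive $kt\nth$ roots, so $\zeta_{kt}H \in \But(n,kt)$. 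For the eigenvalues, an eigenvalue of $\sqrt{m}^{-1}\zeta_t M$ has the form $\zeta_t \zeta_k^a = \zeta_{kt}^{k + at}$ with $\gcd(a,k) = 1$; since $\gcd(k + at, k) = \gcd(at, k) = 1$ and $\gcd(k + at, t) = \gcd(k, t) = 1$, coprimality of $k$ and $t$ gives $\gcd(k + at, kt) = 1$, so $Y'$ consists of primitive $kt\nth$ roots as required.

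With $X'$ and $Y'$ in hand, condition (2) is immediate: since $Y'$ consists of primitive $kt\nth$ roots, raising each entry of $\zeta_{kt}H$ to a power $j$ coprime to $kt$ is the entrywise action of a Galois automorphism of $\mathbb{Q}[\zeta_{kt}]$, a global equivalence operation that preserves moduli and orthogonality, whence $(\zeta_{kt}H)^{(j)} \in \But(n, kt)$. The substance is condition (1). For $\zeta_{kt}^c \in X'$ I would compute $\sqrt{m}^{1-c}(\zeta_t M)^c = \zeta_t^c\,\sqrt{m}^{1-c}M^c$, so it suffices to know that $\sqrt{m}^{1-c}M^c \in \But(m,\ell)$, after which multiplication by the unit scalar $\zeta_t^c$ lands the result in $\But(m,\ell')$. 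The main obstacle here is bookkeeping with the normalising scalar, whose value depends on the integer $c$ itself rather than on $c \bmod k$; I resolve this by noting that diagonalisability of $M$ together with the hypothesis that all eigenvalues of $\sqrt{m}^{-1}M$ are primitive $k\nth$ roots forces $M^{k} = m^{k/2}I_m$, which makes $\sqrt{m}^{1-c}M^c$ invariant under $c \mapsto c + k$. Reducing $c \equiv 1 + (kt/d)b \equiv 1 + (k/d)(tb) \pmod{k}$ and using $\gcd(t,d) = 1$ shows that $\zeta_k^c$ is again an entry of $\zeta_k H'$ for some $H' \in \But(n,d)$; condition (1) for the sound pair $(\zeta_k H', M)$ then yields $\sqrt{m}^{1-c}M^c \in \But(m,\ell)$, completing the verification.
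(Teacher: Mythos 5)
Your proposal is correct and takes essentially the same approach as the paper: both arguments rest on (i) the eigenvalues of $\sqrt{m}^{-1}\zeta_{t}M$ being primitive $kt\nth$ roots of unity, which makes the second soundness condition a Galois-automorphism triviality, and (ii) the factorisation $\sqrt{m}^{1-a}(\zeta_{t}M)^{a} = \zeta_{t}^{a}\,\sqrt{m}^{1-a}M^{a}$ together with periodicity of this expression in $a$ modulo $k$ (from $M^{k} = m^{k/2}I_{m}$), which reduces the first condition to the hypothesised Hadamard powers of $M$. The differences are organisational only: the paper tracks exponents via the sets $T, D, T', D'$ and arithmetic progressions, taking the largest admissible $X'$, whereas you take the smallest possible $X'$ and spell out details (such as the justification of $M^{k} = m^{k/2}I_{m}$) that the paper leaves implicit.
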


\begin{proof}
The eigenvalues of $\zeta_{t}M$ are primitive $kt\nth$ roots of unity.
Let $T \subset \{1,\ldots,k-1\}$ be the set of all $i$ such that $\sqrt{m}^{1-i}M^i \in \But(m, \ell)$.
By hypothesis, $M$ induces a complete morphism $\But(n,d) \rightarrow \But(nm,\ell)$;
so $T$ contains an arithmetic progression of length $d$, say $D \subseteq T$.

Furthermore, $\sqrt{m}^{1-kt}(\zeta_{t}M)^{kt} = I_m$, and $\sqrt{m}^{1-a}(\zeta_{t}M)^a \in \But(m, \ell t)$ whenever $a \equiv i \mod k$ and $i \in T$.
Let $T'$ be the set of all $a$ such that $\sqrt{m}^{1-a}(\zeta_{t}M)^a \in \But(m, \ell t)$. Define $D' \subseteq T'$ to be the set exponents such that $a \mod t$ is in $D$.
Then $D'$ is an arithmetic progression of length $dt$.  Thus $(\zeta_{kt}H, \zeta_{t}M)$ is $(X',Y')$-sound for all $H \in \But(n,d)$, where $X' = \{\zeta_{kt}^{i} \; : \; i \in T'\}$ and $Y'$ is the set of primitive $kt\nth$ roots of unity.
\end{proof}

\begin{corollary}
If there exists a complete morphism $\But(n,d) \rightarrow \But(nm,\ell)$, there exists a complete morphism $\But(n,dt) \rightarrow \But(nm,\lcm(t,\ell))$.
\end{corollary}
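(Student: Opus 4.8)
The plan is to read off the corollary as the morphism-level repackaging of Theorem~\ref{thm:newfromold}. By the standing assumption of this subsection, the complete morphism $\But(n,d)\to\But(nm,\ell)$ is the one produced by Corollary~\ref{cor:main}; it therefore comes from a matrix $M\in\But(m,\ell)$ whose eigenvalues are all primitive $k$th roots of unity, for a value $d$ of the form prescribed there (in particular $d\mid k$). In that situation the hypothesis of Theorem~\ref{thm:newfromold} holds: writing $T=\{\,i:\sqrt m^{\,1-i}M^{i}\in\But(m,\ell)\,\}$, completeness means $X$ contains a translate of $G_{d}$, equivalently $T$ contains an arithmetic progression $\{c+(k/d)j:0\le j<d\}$ of common difference $k/d$, and $Y$ may be taken to consist of primitive $k$th roots.

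First I would fix $t$ coprime to $k$; this coprimality, inherited from Theorem~\ref{thm:newfromold}, is exactly what forces the eigenvalues of $\zeta_{t}M$ to be primitive $kt$th roots. Applying that theorem shows $(\zeta_{kt}H,\zeta_{t}M)$ is $(X',Y')$-sound, where $Y'$ is the set of primitive $kt$th roots and $X'=\{\zeta_{kt}^{a}:a\in T'\}$ with $T'\supseteq D'=\{c+(k/d)s:0\le s<dt\}$. The key observation is that $\zeta_{kt}^{k/d}=\zeta_{dt}$, since $kt/(k/d)=dt$; hence the progression $D'$, of length $dt$ and common difference $k/d$, gives $X'\supseteq\zeta_{kt}^{\,c}\,G_{dt}$, a full translate of the $dt$th roots of unity.

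Then I would invoke Theorem~\ref{thm:main}. For any $H\in\But(n,dt)$ the entries of $\zeta_{kt}^{\,c}H$ all lie in $X'$, so soundness of $(\zeta_{kt}^{\,c}H,\zeta_{t}M)$ gives $(\zeta_{kt}^{\,c}H)^{\phi}\in\But(nm,\lcm(t,\ell))$, where $\phi:\zeta_{kt}^{a}\mapsto\sqrt m^{\,1-a}(\zeta_{t}M)^{a}$. To see that $\lcm(t,\ell)$ is the correct modulus, note that for $a\equiv i\bmod k$ each image equals $\zeta_{t}^{a}\bigl(\sqrt m^{\,1-i}M^{i}\bigr)$, the product of a $t$th root of unity with a matrix in $\But(m,\ell)$, so its entries lie in $\langle\zeta_{\lcm(t,\ell)}\rangle$. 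Thus $H\mapsto(\zeta_{kt}^{\,c}H)^{\phi}$ is a complete morphism $\But(n,dt)\to\But(nm,\lcm(t,\ell))$.

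The main obstacle is the counting step that upgrades the domain from $\But(n,d)$ to $\But(n,dt)$: one must check that $T'$ genuinely contains an arithmetic progression of length \emph{exactly} $dt$ with common difference $k/d$, so that $X'$ captures the whole coset $\zeta_{kt}^{\,c}G_{dt}$ rather than a proper subset of it. This is where $d\mid k$ and $\gcd(t,k)=1$ are indispensable, and it is essentially the content already secured in the proof of Theorem~\ref{thm:newfromold}; what remains is only to confirm that the common difference is $k/d$ (so the coset is of $G_{dt}$, neither larger nor smaller) and that the sharper modulus $\lcm(t,\ell)$, rather than $t\ell$, is correct.
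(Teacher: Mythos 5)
Your proof is correct and follows exactly the route the paper intends: the corollary is stated without a separate proof because it is read off from the proof of Theorem~\ref{thm:newfromold}, whose length-$dt$ arithmetic progression $D'$ (common difference $k/d$) you correctly convert into the full coset $\zeta_{kt}^{c}G_{dt}\subseteq X'$ before invoking Theorem~\ref{thm:main}. The details you supply --- that $\zeta_{kt}^{k/d}=\zeta_{dt}$, that each image $\zeta_{t}^{a}\bigl(\sqrt{m}^{\,1-i}M^{i}\bigr)$ lies in $\But(m,\lcm(t,\ell))$ rather than merely $\But(m,t\ell)$, and that $t$ must be coprime to $k$ (a hypothesis the corollary inherits from the theorem but does not restate) --- are precisely the points the paper leaves implicit.
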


As an illustration of Theorem \ref{thm:newfromold} we generalise Turyn's morphism.

\begin{example}
Let $M_{8}$ be the matrix of Example \ref{TurynEx}. The eigenvalues of $\zeta_{3}M_{8}$ are primitive $24\nth$ roots of unity, and $\sqrt{2}^{1-i}(\zeta_{3}M_{8})^i$
is Hadamard for all odd $i$. So for any $H \in \But(n,12)$, the pair $(\zeta_{24}H,\zeta_{3}M_{8})$ is sound, and thus there is a complete morphism $\But(n,12) \rightarrow \But(2n,6)$.
\end{example}

We can also generalize to partial morphisms.  For example, the matrix
\[ M_{12} = 2^{-1}\left[ \begin{array}{rrrr} 1 & 1 & 1 & 1 \\ 1 & -1 & -1 & 1 \\ 1 & 1 & -1 & -1 \\ -1 & 1 & -1 & 1 \end{array}\right] \]
is of order 12, has primitive $12\nth$ roots as its eigenvalues, and $2^{-i}M_{12}^{i}$ is Hadamard for all $i \in T = \{1,2,4,\abk 5,7,8,10,11\}$.
Then $2^{1-i}(\zeta_{5}M_{12})^i$ is Hadamard for all $i \in T' = \{1,\ldots,60-1\} \setminus \{3,6,\ldots,57\}$, and the eigenvalues of $\zeta_{5}2^{-1}M_{12}$ are primitive $60\nth$ roots of unity.  So $T'$ contains two arithmetic progressions of length $20$.  Thus, with the appropriate choice of $X$ and $Y$, $(\zeta_{60}H,\zeta_{5}M_{12})$ is sound for any $H \in \But(n,20)$, and $(K,\zeta_{5}M_{12})$ is sound for any $K \in \But(n,60)$ such that no entry in $K$ is a $20\nth$ roots of unity.  So there is a complete morphism $\But(n,20) \rightarrow \But(4n,10)$ and a partial morphism $\But(n,60) \rightarrow \But(4n,10)$.

\subsection{Construction of complete morphisms}

The construction of large sets of MUBs is a challenging open problem, even without the added restriction that all of the bases arise as powers of a single matrix.
As a result, we do not expect in general to find subgroups of $\langle m^{-1/2} M \rangle$ which contain many Hadamard matrices. On the other hand, there seem to
be few restrictions on cosets containing many Hadamard powers. This is precisely the intuition behind Corollary \ref{cor:main}.

It seems natural (though not strictly necessary) to study matrices $M$ such that $M \in \But(m, \ell)$ and the unitary matrix $m^{-1/2}M$ has as eigenvalues only primitive $k\nth$ roots of unity. The characteristic polynomial of $m^{-1/2}M$ is a divisor of a power of the cyclotomic polynomial $\Phi_{k}(x)$. So the possible characteristic polynomials then depend on the factorisation of $\Phi_{k}(x)$ in $\mathbb{Q}[\zeta_{\ell}, \sqrt{m}]$. The following Proposition gives an easy construction of larger complete morphisms from small ones.

\begin{proposition}\label{prop:completes}
Suppose that $4 \mid k$ and that the eigenvalues of $M \in \But(m, \ell)$ are primitive $k\nth$ roots of unity. If $H \in \But(n, \ell)$ is Hermitian, then the eigenvalues of $H \otimes M \in \But(mn, \ell)$ are all primitive $k\nth$ roots of unity.
\end{proposition}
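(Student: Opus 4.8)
The plan is to reduce the statement to an explicit description of the spectrum of the Kronecker product, and then to settle the one genuinely arithmetic point: that multiplication by $-1$ sends primitive $k\nth$ roots of unity to primitive $k\nth$ roots of unity precisely when $4 \mid k$. I will work throughout with the unitary normalisations $m^{-1/2}M$ and $(mn)^{-1/2}(H \otimes M)$, since these are the matrices whose eigenvalues are genuinely roots of unity (this is the convention of Definition \ref{defn:sound}). First I would record the easy containment $H \otimes M \in \But(mn,\ell)$: the entries are products of $\ell\nth$ roots of unity, and $(H \otimes M)(H \otimes M)^{\ast} = (HH^{\ast}) \otimes (MM^{\ast}) = (nI_{n}) \otimes (mI_{m}) = mn\,I_{mn}$, exactly as for the Kronecker morphism of Section 2.

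Next I would compute the eigenvalues of the factors. Since $H$ is Hermitian and $HH^{\ast} = nI_{n}$, we have $H^{2} = nI_{n}$, so $H$ is diagonalisable with every eigenvalue equal to $\pm\sqrt{n}$. Any Butson matrix is normal, as $MM^{\ast} = mI_{m}$ forces $M^{\ast}M = mI_{m}$ as well, so $M$ is diagonalisable; by hypothesis each eigenvalue of $m^{-1/2}M$ is a primitive $k\nth$ root of unity. Because the eigenvalues of a Kronecker product are the pairwise products of the eigenvalues of the factors, every eigenvalue of $(mn)^{-1/2}(H \otimes M) = (n^{-1/2}H) \otimes (m^{-1/2}M)$ has the form $\pm\zeta$ for some primitive $k\nth$ root of unity $\zeta$. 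The case $+\zeta$ is immediate, so the whole content of the proposition is to show that $-\zeta$ is again primitive.

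This is the step I expect to be the crux. Write $\zeta = \zeta_{k}^{j}$ with $\gcd(j,k) = 1$; since $k$ is even this forces $j$ to be odd. Using $-1 = \zeta_{k}^{k/2}$ gives $-\zeta = \zeta_{k}^{j + k/2}$, so it suffices to check that $\gcd(j + k/2, k) = 1$. Here the hypothesis $4 \mid k$ enters in exactly two ways. First, $k/2$ is then even, so $j + k/2$ remains odd and is not divisible by $2$. Second, for each odd prime $p \mid k$ we have $p \mid k/2$, so $j + k/2 \equiv j \not\equiv 0 \pmod{p}$. Hence no prime divisor of $k$ divides $j + k/2$, and $-\zeta$ is a primitive $k\nth$ root of unity, which completes the argument.

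It is worth emphasising that this final congruence is where $4 \mid k$ is indispensable rather than cosmetic: when $k \equiv 2 \pmod{4}$ the integer $k/2$ is odd, $j + k/2$ becomes even, and primitivity fails. The concrete witness is $-\zeta_{6} = \zeta_{6}^{4}$, a non-primitive sixth root of unity, which is precisely the obstruction seen in the unreal $\But(n,6)$ situation of Example \ref{CCdeL ex}. Assembling the three observations above — the Butson containment, the spectral description $\pm\zeta$, and the preservation of primitivity under negation — yields the claim that every eigenvalue of $(mn)^{-1/2}(H \otimes M)$ is a primitive $k\nth$ root of unity.
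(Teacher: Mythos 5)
Your proof is correct and takes essentially the same route as the paper's: the eigenvalues of a Hermitian matrix in $\But(n,\ell)$ are $\pm\sqrt{n}$, the eigenvalues of a Kronecker product are the pairwise products, and $4 \mid k$ ensures that negation preserves primitivity of $k$th roots of unity. The only difference is that you fully justify the last arithmetic step (via the gcd computation on $j + k/2$), which the paper merely asserts --- and in fact asserts with a typo, writing $\zeta_{k}^{i+n/2}$ where $\zeta_{k}^{i+k/2}$ is meant.
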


\begin{proof}
The eigenvalues of $H$ are real, and so all lie in $\{ \pm \sqrt{n}\}$. The eigenvalues of a tensor product are the products of the eigenvalues
of the component matrices, and by hypothesis, $\zeta_{k}^{i}$ is primitive if and only if $\zeta_{k}^{i + n/2}$ is.
\end{proof}

The first interesting examples of complete morphisms occur when $M$ is real and its eigenvalues are primitive $8\nth$ roots of unity. The matrix $M_{8}$ in Example \ref{TurynEx} is one such. It is somewhat unusual in that the size of the matrix is smaller than the degree of $\Phi_{8}(x)$; in fact its characteristic polynomial comes from an exceptional factorisation of cyclotomic polynomials related to Sophie Germain's identity. Larger examples of complete morphisms can be constructed from the Turyn example via tensor products. We combine Proposition \ref{prop:completes} and Example \ref{TurynEx}: since real symmetric Hadamard matrices are easily constructed at orders $1, 2$ and $4t$ for all $t \leq 10$, we will focus on the cases $n \equiv 4 \mod 8$. The classification of $\But(10, 4)$ and $\But(14, 4)$ has been completed up to Hadamard equivalence by Lampio, Sz\"{o}ll\H{o}si and \"Osterg\r{a}rd: Hermitian matrices exist at both orders \cite{LSO}. We have not yet managed to construct a real Hadamard matrix of order $36$
with eigenvalues in the set $\{6\zeta_{8}, 6\zeta_{8}^{3}, 6\zeta_{8}^{5}, 6\zeta_{8}^{7} \}$.

\begin{theorem}
There exists a complete morphism $\But(n, 4) \rightarrow \But(2mn,2)$ whenever there exists a Hermitian matrix in $\But(m, 4)$.
In particular, there exist such morphisms for $m = 1$ and for all even $m \leq 8$.
\end{theorem}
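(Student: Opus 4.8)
The plan is to produce, from a Hermitian $H \in \But(m,4)$, a \emph{real} plug-in matrix $M \in \But(2m,2)$ whose eigenvalues are all primitive $8$th roots of unity and which satisfies the power condition $\sqrt{2m}^{\,1-i}M^{i} \in \But(2m,2)$ for every $i$ coprime to $8$. Once such an $M$ is in hand, Corollary~\ref{cor:main} with $k = 8$ (so that $d = 4$) delivers the complete morphism $\But(n,4) \to \But(2mn,2)$ directly. The matrix $M$ will be the image $M = (\zeta_8 H)^{\phi}$ of $H$ under the Turyn morphism of Example~\ref{TurynEx}, so that $M \in \But(2m,2)$ is automatic; the work is to control its eigenvalues and its powers.

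The key observation I would establish first is an explicit description of this Turyn image. Writing $U = \tfrac{1}{\sqrt2}M_8$ and $J = U^{2} = \left[\begin{smallmatrix} 0 & 1 \\ -1 & 0\end{smallmatrix}\right]$ (so $J^{2} = -I_{2}$), a short computation with $\phi$ shows that the $(j,k)$ block of $M$ equals $M_8 J^{\,c_{jk}}$, where $h_{jk} = i^{c_{jk}}$. Hence $M = (I_m \otimes M_8)\,r(H)$, where $r$ is the realification map $M_m(\C) \to M_{2m}(\R)$ sending $i \mapsto J$ blockwise. This $r$ is an $\R$-algebra homomorphism with $r(H^{\ast}) = r(H)^{\top}$, and since $U$ commutes with $J = U^{2}$ the two factors $I_m \otimes U$ and $r(H)$ commute. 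I would record the two structural facts I need: $r(H^{i}) = r(H)^{i}$, and $H^{2} = mI_m$ (as $H$ is Hermitian Hadamard), whence $H^{i} = m^{(i-1)/2}H$ for odd $i$.

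The eigenvalue claim then follows as in Proposition~\ref{prop:completes}: $\tfrac{1}{\sqrt{2m}}M = (I_m \otimes U)\,r(\tfrac{1}{\sqrt m}H)$ is a product of commuting diagonalisable matrices, the first with eigenvalues $\zeta_8^{\pm1}$ and the second an involution with eigenvalues $\pm 1$, so every eigenvalue is a product $\zeta_8^{\pm1}\cdot(\pm1)$, a primitive $8$th root as required. For the power condition, using that the two factors commute and that $H^{i} = m^{(i-1)/2}H$ for odd $i$, I would compute
\[
\sqrt{2m}^{\,1-i}M^{i} \;=\; \bigl(I_m \otimes \sqrt{2}^{\,1-i}M_8^{\,i}\bigr)\,r(H).
\]
By Example~\ref{TurynEx} the factor $\sqrt{2}^{\,1-i}M_8^{\,i}$ lies in $\But(2,2)$ for odd $i$, i.e.\ is a $\pm1$ matrix, while every block $J^{\,c}$ of $r(H)$ is a signed permutation matrix; thus each block of the product is again $\pm1$, and the matrix is Hadamard because it is $\sqrt{2m}$ times a unitary. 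This is exactly the power condition, and it is the crux of the argument: it is an instance of Question~\ref{coprime Q}, which is open in general, so it cannot be taken for granted and must be extracted from the explicit factorisation above.

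Finally I would apply Corollary~\ref{cor:main} to $M$ to obtain the morphism, and treat the ``in particular'' clause by exhibiting Hermitian representatives: the Sylvester matrices give real symmetric (hence Hermitian) Hadamard matrices in $\But(m,4)$ for $m \in \{1,2,4,8\}$, and the only remaining case $m = 6$---where no real Hadamard matrix exists---is covered by a Hermitian matrix in $\But(6,4)$, which can be found directly. I expect the block-level identity $M = (I_m\otimes M_8)\,r(H)$ and the commuting of the factors to be routine; the genuine obstacle is the power condition, and the whole point of the commuting realification factorisation is to reduce it, for odd $i$, to the already-known Turyn relations for $M_8$ of Example~\ref{TurynEx}.
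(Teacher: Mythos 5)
Your proof is correct, but it is not the paper's argument, and the difference is worth spelling out. The paper gives no formal proof at all: the intended justification is the paragraph preceding the theorem, which ``combines Proposition~\ref{prop:completes} and Example~\ref{TurynEx}'', i.e.\ takes the plug-in matrix to be the tensor product $H \otimes M_{8}$ and appeals to Corollary~\ref{cor:main}. When $H$ is a real symmetric Hadamard matrix this coincides with your construction, since then $r(H) = H \otimes I_{2}$ and $(I_{m}\otimes M_{8})\,r(H) = H \otimes M_{8}$. But when $H \in \But(m,4)$ is Hermitian with genuinely complex entries --- exactly the cases such as $m = 6, 10, 14$ for which the Hermitian hypothesis was introduced, real Hadamard matrices of those orders being nonexistent --- the matrix $H \otimes M_{8}$ has entries in $\{\pm 1, \pm i\}$, so it lies only in $\But(2m,4)$, and Corollary~\ref{cor:main} then produces a morphism $\But(n,4) \rightarrow \But(2mn,4)$ rather than the claimed $\But(n,4) \rightarrow \But(2mn,2)$. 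Your factorisation $M = (\zeta_{8}H)^{\phi} = (I_{m}\otimes M_{8})\,r(H)$ repairs precisely this point: $M$ is real by construction, the commuting-factors argument (a twisted-product analogue of Proposition~\ref{prop:completes}) gives the primitive $8\nth$ root eigenvalues, and the identity $\sqrt{2m}^{\,1-i}M^{i} = \bigl(I_{m}\otimes \sqrt{2}^{\,1-i}M_{8}^{\,i}\bigr)\,r(H)$ for odd $i$ reduces the power condition to Turyn's relations --- a reduction that is genuinely necessary, as you observe, since Question~\ref{coprime Q} is open. So your argument proves the theorem as stated, whereas the paper's sketch literally covers only the real symmetric case; what the paper's route buys in exchange is brevity, since for real $H$ everything follows in one line from Proposition~\ref{prop:completes}.

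One caveat, on which you and the paper are equally exposed: the ``in particular'' clause for $m = 6$ requires a Hermitian matrix in $\But(6,4)$, which you assert ``can be found directly'' and which the paper never exhibits either (it cites \cite{LSO} only for orders $10$ and $14$). Since $m = 6$ is the one even order $\leq 8$ not covered by Sylvester matrices, you should write down such a matrix, or give a reference, to make that clause self-contained.
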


It seems more challenging to construct complete morphisms $\But(n,k) \rightarrow \But(mn, \ell)$ for which $k$ and $\ell$ are coprime.

\begin{question}
What is the smallest $m$ for which there is a complete morphism $\But(n,3) \rightarrow \But(mn,2)$?
\end{question}

If $M$ yields a complete morphism as in the question, then the eigenvalues of $M$ are primitive $9k^{\textrm{th}}$ roots of unity for some $k \in \mathbb{N}$. If $k = 4$, the characteristic polynomial of $(12t)^{-1/2}M$ is necessarily $(x^{12} - x^{6} + 1)^{t}$, where $M$ is a matrix of order $12t$. The characteristic polynomial of $M$ can be obtained from that of the unitary matrix via the substitution $x^{k} \mapsto (12t)^{(n-k)/2}x^{k}$.

\subsection{Eigenvalues of $\But(2,\ell)$}

In this section we classify all $M \in \But(2,\ell)$ for which the corresponding unitary matrix has finite multiplicative order, and study morphisms derived
from these matrices. We begin with a lemma on roots of unity which will constrain the eigenvalues of $M$.

\begin{proposition} \label{prop:roots}
Suppose that $\alpha$ and $\lambda$ are roots of unity such that $\Re(\alpha) = \sqrt{2} \Re(\lambda)$. Then up to negation and complex conjugation, $[\alpha, \lambda]$ is one of $[i, i]$, $[1,\zeta_{8}]$, $[\zeta_{8}, \zeta_{6}]$.
\end{proposition}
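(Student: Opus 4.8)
The plan is to translate the hypothesis into a statement about algebraic conjugates inside a cyclotomic field, and then reduce to an elementary count of arguments. Writing $\alpha$ and $\lambda$ as roots of unity, set $c = \alpha + \alpha^{-1} = 2\Re(\alpha)$ and $d = \lambda + \lambda^{-1} = 2\Re(\lambda)$, both real algebraic integers; the hypothesis reads $c = \sqrt{2}\,d$. If $d = 0$ then $c = 0$, so $\alpha,\lambda \in \{\pm i\}$ and the pair is $[i,i]$ after negation and conjugation. So assume $d \neq 0$; then $\sqrt{2} = c/d$ lies in the cyclotomic field $\mathbb{Q}(\alpha,\lambda) = \mathbb{Q}(\zeta_{N})$, where $N$ is the least common multiple of the orders of $\alpha$ and $\lambda$. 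In particular $8 \mid N$.

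Next I would push the relation through the Galois group. For each $k$ coprime to $N$, the automorphism $\zeta_{N} \mapsto \zeta_{N}^{k}$ fixes $\mathbb{Q}$, commutes with complex conjugation, and hence sends $\Re(\alpha) \mapsto \Re(\alpha^{k})$, $\Re(\lambda)\mapsto \Re(\lambda^{k})$, and $\sqrt{2}\mapsto \pm\sqrt{2}$. Applying it to $\Re(\alpha) = \sqrt{2}\,\Re(\lambda)$ yields $|\Re(\alpha^{k})| = \sqrt{2}\,|\Re(\lambda^{k})|$. Since $|\Re(\alpha^{k})| \le 1$, every Galois conjugate of $\lambda$ satisfies $|\Re(\lambda^{k})| \le 1/\sqrt{2}$. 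As $k$ ranges over the units modulo $N$, the powers $\lambda^{k}$ range over all primitive $b$-th roots of unity, where $b$ is the order of $\lambda$. Thus every primitive $b$-th root of unity has real part of modulus at most $1/\sqrt{2}$; equivalently, its argument lies within $45^{\circ}$ of $\pm 90^{\circ}$.

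The remaining work is an elementary enumeration, which is the real content. The primitive $b$-th roots have arguments $2\pi j/b$ with $\gcd(j,b)=1$. For $b \ge 9$ the root $e^{2\pi i/b}$ has argument strictly less than $45^{\circ}$ and so is excluded; a direct check of $b \le 8$ eliminates $b \in \{1,2,5,7\}$ and leaves $b \in \{3,4,6,8\}$. For each surviving $b$ the value $\Re(\lambda)$ takes one of finitely many values, whence $\Re(\alpha) = \sqrt{2}\,\Re(\lambda) \in \{0, \pm\tfrac{1}{\sqrt 2}, \pm 1\}$ pins $\alpha$ down to a root of unity: $b=4$ forces $\alpha,\lambda\in\{\pm i\}$, $b=8$ forces $\alpha \in \{\pm 1\}$, and $b\in\{3,6\}$ forces $\alpha \in \{\zeta_{8}^{\pm1},\zeta_{8}^{\pm3}\}$. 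Reducing each family modulo negation and complex conjugation collapses them to the three listed pairs. I expect the only delicate bookkeeping to be verifying that the $b=3$ and $b=6$ families both collapse to $[\zeta_{8},\zeta_{6}]$ (using $-\zeta_{3} = \overline{\zeta_{6}}$ and $-\zeta_{8}^{3} = \overline{\zeta_{8}}$), and that the Galois step is applied correctly, so that the bound holds for every conjugate of $\lambda$ rather than just for $\lambda$ itself.
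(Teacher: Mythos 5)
Your proof is correct, and it reaches the classification by a genuinely different route from the paper's. Both arguments use the Galois action on a cyclotomic field, but in different ways: the paper applies a \emph{single} automorphism to normalise $\lambda = \zeta_{k}$, rewrites the hypothesis as $(\alpha+\alpha^{-1})/(\lambda+\lambda^{-1}) = \sqrt{2}$, solves this as a quadratic in $\alpha$ to obtain $\sqrt{2}\alpha = (\lambda+\lambda^{-1}) \pm \sqrt{\lambda^{2}+\lambda^{-2}}$, and then uses magnitude estimates to show that for $k \geq 8$ the right-hand side is real with absolute value $\sqrt{2}$ only when $k = 8$, finishing $k < 8$ by inspection. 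You never solve for $\alpha$ at all: you push the relation through the \emph{entire} Galois group, using only the trivial bound $|\Re(\alpha^{j})| \leq 1$ together with $\sigma_{j}(\sqrt{2}) = \pm\sqrt{2}$, to conclude that every conjugate of $\lambda$ has real part of modulus at most $1/\sqrt{2}$; surjectivity of $(\mathbb{Z}/N\mathbb{Z})^{\times} \to (\mathbb{Z}/b\mathbb{Z})^{\times}$ then restricts the order of $\lambda$ to $b \in \{3,4,6,8\}$, after which $\Re(\alpha) = \sqrt{2}\,\Re(\lambda) \in \{0, \pm 1/\sqrt{2}, \pm 1\}$ pins down $\alpha$, and your closing identities $-\zeta_{3} = \overline{\zeta_{6}}$, $-\zeta_{8}^{3} = \overline{\zeta_{8}}$ correctly collapse the families to the three listed pairs. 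What your route buys is robustness: there is no square-root branch or sign bookkeeping (the paper's displayed solution in fact carries a sign slip, $-(\lambda+\lambda^{-1})$ where it should be $+(\lambda+\lambda^{-1})$, and its assertion $\Re(\lambda^{2}) > 0$ fails, harmlessly, at $k = 8$), and the argument generalises verbatim with $\sqrt{2}$ replaced by $\sqrt{m}$, yielding $|\Re(\lambda^{j})| \leq 1/\sqrt{m}$ for all conjugates. What the paper's computation buys is an explicit formula expressing $\alpha$ in terms of $\lambda$, which identifies the solutions directly rather than by a final matching of real parts.
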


\begin{proof}
Recall that $\Re(\zeta) = 1/2(\zeta + \zeta^{-1})$. So we require the solutions of the identity
\[ \frac{\alpha + \alpha^{-1}}{\lambda + \lambda^{-1}} = \sqrt{2}\,. \]
Expanding as a quadratic in $\alpha$ and solving, we have that
\[ \sqrt{2}\alpha = -(\lambda + \lambda^{-1}) \pm \sqrt{\lambda^{2} + \lambda^{-2}} \,.\]
Applying field automorphisms, we may assume that $\lambda = \zeta_{k}$ for some suitable integer $k$.
Suppose that $k \geq 8$, then $\lambda + \lambda^{-1} = 2\Re(\lambda) \geq \sqrt{2}$, while $|\sqrt{\lambda^{2} + \lambda^{-2}}| \leq \sqrt{2}$.
Since $k \geq 8$, we have that $\Re(\lambda^{2}) > 0$, hence the right hand side of the equation is real and negative. The unique solution of
absolute value $\sqrt{2}$ occurs when $k = 8$. The solutions with $k < 8$ can be found by inspection.
\end{proof}

\begin{corollary}\label{cor:eigratio}
Suppose that $M \in \But(2, \ell)$ such that $2^{-1/2}M$ has finite multiplicative order.
Then $\lambda_{1}\lambda_{2}^{-1} \in \{-1, \pm i, \pm \zeta_{3}\}$.
\end{corollary}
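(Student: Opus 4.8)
The plan is to distil the hypotheses into a single real-part identity of the shape treated in Proposition \ref{prop:roots}, and then read off the possible eigenvalue ratios from the three cases listed there. To set up, note that since $2^{-1/2}M$ is unitary of finite multiplicative order, its eigenvalues $\mu_1,\mu_2$ are roots of unity, so the eigenvalues of $M$ are $\lambda_j=\sqrt2\,\mu_j$. Writing the diagonal entries of $M$ as $a$ and $d$ (each a root of unity, as $M\in\But(2,\ell)$), the trace and determinant of $M$ give
\[ a+d=\lambda_1+\lambda_2=\sqrt2(\mu_1+\mu_2),\qquad ad-bc=\det M=2\mu_1\mu_2. \]
Here $ad$ and $bc$ are roots of unity and $|\det M|=2$, so the two must be antipodal; hence $ad=-bc$ and $\mu_1\mu_2=ad$. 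This identifies the product of the eigenvalues of $2^{-1/2}M$ with the root of unity $ad$, of which I may take a square root.

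The key manipulation is to pass to square roots so as to linearise the trace relation. Set $r=\lambda_1\lambda_2^{-1}=\mu_1/\mu_2$, a root of unity, choose a root of unity $\sigma$ with $\sigma^2=r$, and write $\mu_1=\sqrt{ad}\,\sigma$, $\mu_2=\sqrt{ad}\,\bar\sigma$ for a compatible branch of $\sqrt{ad}$; then $\mu_1+\mu_2=2\sqrt{ad}\,\Re(\sigma)$. Substituting into the trace identity and dividing by $2\sqrt{ad}$ yields $\tfrac12\!\left(\sqrt{a/d}+\sqrt{d/a}\right)=\sqrt2\,\Re(\sigma)$. Setting $v=\sqrt{a/d}$, which is again a root of unity, the left-hand side is exactly $\Re(v)$, so I obtain $\Re(v)=\sqrt2\,\Re(\sigma)$.

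Now Proposition \ref{prop:roots} applies to the pair $(v,\sigma)$: up to negation and complex conjugation it is one of $[i,i]$, $[1,\zeta_8]$, $[\zeta_8,\zeta_6]$. In each case I would track the orbit of the second coordinate $\sigma$ under conjugation (which fixes real parts, hence may be applied to each coordinate independently) and simultaneous negation, then square, since $r=\sigma^2$. The case $[i,i]$ forces $\sigma\in\{\pm i\}$, giving $r=-1$; the case $[1,\zeta_8]$ makes $\sigma$ a primitive $8$\nth root, so $r=\sigma^2\in\{\pm i\}$; and the case $[\zeta_8,\zeta_6]$ makes $\sigma$ a primitive $6$\nth (or cube) root, so $r=\sigma^2\in\{\zeta_3,\zeta_3^2\}$. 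Collecting these gives $\lambda_1\lambda_2^{-1}\in\{-1,\pm i,\pm\zeta_3\}$, as claimed.

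The main obstacle is bookkeeping rather than any genuine difficulty. I must justify the antipodality step for the determinant, make consistent branch choices for $\sqrt{ad}$, $\sigma$, and $v$, and observe that the ambiguity $v\mapsto-v$ only flips the sign in the relation $\Re(v)=\pm\sqrt2\,\Re(\sigma)$, which is harmless because Proposition \ref{prop:roots} is already stated up to negation. The most delicate point is to propagate the negation/conjugation equivalence correctly through the squaring map $\sigma\mapsto\sigma^2$, so that the final list of ratios is neither over- nor under-counted.
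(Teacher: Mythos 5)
Your proof is correct and takes essentially the same route as the paper: both arguments funnel into the real-part identity $\Re(v)=\sqrt{2}\,\Re(\sigma)$ between two roots of unity and then invoke Proposition \ref{prop:roots}, your $v=\sqrt{a/d}$ and $\sigma$ playing exactly the roles of the paper's diagonal entry $\alpha$ (after its scalar normalisation $(\alpha\delta)^{-1/2}M$, which is your division by $\sqrt{ad}$) and normalised eigenvalue $\lambda'$, followed by the same squaring step $r=\sigma^{2}$. The only difference is cosmetic: the paper reduces $M$ to the normal form $M_{\alpha}$ and computes its eigenvalues explicitly via the quadratic formula, whereas you extract the identity from the trace together with the antipodality $ad=-bc$ forced by $|\det M|=2$.
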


\begin{proof}
For an arbitrary $2 \times 2$ Butson matrix
\[ M = \left[ \begin{array}{rr} \alpha & \beta \\ \gamma & \delta \end{array}\right] \in \But(2, \ell), \]
we observe that $(\alpha^{-1/2} \delta^{-1/2}) M$ has real trace, and hence its eigenvalues are
conjugate. Furthermore, the ratio of the eigenvalues of $H$ is preserved by scalar multiplication.
Orthogonality of rows forces $\beta = -\gamma^{\ast}$. In fact, different choices of $\beta$ yield similar matrices.
So to compute the ratio of the eigenvalues of $M$, it suffices to consider matrices of the form
\begin{equation}
\label{normform}
M_{\alpha} = \left[ \begin{array}{rr} \alpha & \alpha \\ -\alpha^{*} & \alpha^{*} \end{array}\right]\,.
\end{equation}
We compute the eigenvalues explicitly:
\[ \lambda = \Re(\alpha) + i \sqrt{ 2 - \Re(\alpha)^{2}}, \,\,\, \lambda^{\ast} = \Re(\alpha) - i \sqrt{ 2 - \Re(\alpha)^{2}}\,.\]
Since clearly $\Re(\alpha) \leq 1$, the second term is always purely imaginary and we have $\Re(\alpha) = \Re(\lambda)$.
The matrix $2^{-1/2}M_{\alpha}$ has finite order if and only if its eigenvalues are roots of unity. Setting $2^{-1/2}\lambda = \lambda'$, we require the classification of  the pairs of roots of unity $(\alpha, \lambda')$ for which $\Re(\alpha) = \sqrt{2}\Re(\lambda')$. Now apply Proposition \ref{prop:roots}.
\end{proof}

Corollary \ref{cor:eigratio} reduces the analysis of the $2 \times 2$ Butson matrices to three cases. We deal with the traceless matrices
separately, since the proof is short, and we never obtain morphisms for which $k \geq \ell$.

\begin{lemma}\label{traceless}
Suppose that $M \in \But(2, \ell)$ such that $2^{-1/2}M$ has finite multiplicative order with $k\nth$ root of unity eigenvalues.
Then if $\lambda_{1} = -\lambda_{2}$, $k \leq \ell$.
\end{lemma}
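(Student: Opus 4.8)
The plan is to exploit that the hypothesis $\lambda_1 = -\lambda_2$ is equivalent to $M$ being traceless, and then to compute the eigenvalues of $2^{-1/2}M$ directly from the entries of $M$, rather than passing through the scaled normal form of Corollary \ref{cor:eigratio}. Write $M = \begin{pmatrix} \alpha & \beta \\ \gamma & \delta \end{pmatrix} \in \But(2,\ell)$, so that $\alpha,\beta,\gamma,\delta \in \langle \zeta_\ell \rangle$. Since $\lambda_1 + \lambda_2 = 2^{-1/2}\Tr(M)$, the condition $\lambda_1 = -\lambda_2$ forces $\Tr(M) = 0$, i.e. $\delta = -\alpha$.

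The key computational step is to read off the determinant from the orthogonality of the rows. The vanishing of the $(1,2)$-entry of $MM^{\ast}$ gives $\alpha \overline{\gamma} = \beta \overline{\alpha}$; multiplying through by $\alpha$ and using $|\alpha| = 1$ yields $\beta = \alpha^2 \overline{\gamma}$, whence $\beta\gamma = \alpha^2$. Therefore $\det M = \alpha\delta - \beta\gamma = -\alpha^2 - \alpha^2 = -2\alpha^2$, and since $\Tr(M) = 0$ the characteristic polynomial of $M$ is $\lambda^2 - 2\alpha^2$. Hence the eigenvalues of $M$ are $\pm\sqrt{2}\,\alpha$, and those of the unitary matrix $2^{-1/2}M$ are precisely $\alpha$ and $-\alpha$.

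To conclude, observe that these two eigenvalues are exactly the diagonal entries of $M$: indeed $\alpha = M_{11}$ and $-\alpha = \delta = M_{22}$, so both lie in $\langle \zeta_\ell \rangle$ and are themselves $\ell\nth$ roots of unity. Thus the order of each eigenvalue divides $\ell$, and since $2^{-1/2}M$ is unitary hence diagonalisable, its multiplicative order $k$ is the least common multiple of these orders and so also divides $\ell$. In particular $k \le \ell$, as claimed. (If $\ell$ is odd then $-1 = \delta\alpha^{-1} \notin \langle\zeta_\ell\rangle$ forces the absence of any traceless matrix, and the statement is vacuous.)

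I do not anticipate a genuine obstacle: the whole content of the lemma is that in the traceless case the eigenvalues of $2^{-1/2}M$ are, up to sign, diagonal entries of $M$, so no extension of $\langle\zeta_\ell\rangle$ is ever required and the inequality $k \le \ell$ is immediate. The only point demanding care is precisely to avoid the normal form $M_\alpha$ of Corollary \ref{cor:eigratio}, since the scalar $\alpha^{-1/2}\delta^{-1/2}$ used there can enlarge the field of definition and thereby destroy the bookkeeping of $\ell$; working directly with $M$ sidesteps this entirely.
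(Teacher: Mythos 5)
Your proof is correct and follows essentially the same route as the paper's: tracelessness gives $m_{22} = -m_{11}$, row orthogonality pins down the off-diagonal product, the eigenvalues of $M$ come out as $\pm\sqrt{2}\,m_{11}$, and hence the eigenvalues of $2^{-1/2}M$ are the two diagonal entries themselves, forcing $k \le \ell$. The only cosmetic difference is that the paper parameterises the off-diagonal entries as $ab$ and $ab^{\ast}$ while you work directly with $\beta\gamma = \alpha^{2}$; your closing observation that $-\alpha = m_{22}$ is itself an entry (so no parity assumption on $\ell$ is needed) is a nice touch but not a different argument.
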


\begin{proof}
Let $a = m_{11}$, and so $m_{22} = -a$. Orthogonality of the rows of $M$ implies that $m_{12} = ab$ and
$m_{21} = ab^{\ast}$ for arbitrary $b$ of modulus $1$. It is easily verified that different choices of $b$ produce similar
matrices and that the eigenvalues of $M$ are $\pm \sqrt{2} a$. Hence $k \leq \ell$.
\end{proof}

Now we turn our attention to the remaining cases of Corollary \ref{cor:eigratio}. We will restrict attention to matrices $M$ in $\But(2, \ell)$ for
which the eigenvalues of $2^{-1/2}M$ are primitive $k\nth$ roots. From this information, we easily obtain a classification of all complete morphisms of order $2$.

\begin{theorem}\label{order2class}
Suppose that $M \in \But(2, \ell)$, and that the eigenvalues of $M$ are both primitive $k\nth$
roots of unity for some $k > \ell$. Then $M$ is one of the following, where $a,b$ are $\ell\nth$ roots of unity.
\begin{enumerate}
\item $M_{1} = \left( \begin{array}{ll} a & ab \\ -ab^{\ast} & a \end{array}\right)$, and $\ell = 2^{\alpha}t$ where $t$ is odd. Then $k > \ell$ when $\alpha \leq 2$, and both eigenvalues have the same order when $\alpha \neq 3$.

\item $M_{2} = \left( \begin{array}{ll} a & ab \\ -iab^{\ast} & ia \end{array}\right)$, and $\ell = 2^{\alpha}3^{\beta}t$ where $t$ is coprime to $6$. Then $k > \ell$ when $\beta \leq 1$ or $\alpha \leq 3$, and both eigenvalues have the same order when $\beta \neq 1$ and $\alpha \neq 3$.
\end{enumerate}
\end{theorem}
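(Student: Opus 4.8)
The plan is to feed the hypothesis into Corollary~\ref{cor:eigratio}, eliminate the degenerate eigenvalue ratio with Lemma~\ref{traceless}, reconstruct $M$ explicitly in the two surviving cases, and finally read off the divisibility conditions on $\ell$ from the orders of the eigenvalues. Since the eigenvalues of $M$ are roots of unity, $2^{-1/2}M$ has finite multiplicative order, so Corollary~\ref{cor:eigratio} applies and the ratio $\lambda_1\lambda_2^{-1}$ of the two eigenvalues lies in $\{-1,\pm i,\pm\zeta_3\}$. The value $-1$ is exactly the traceless case, where Lemma~\ref{traceless} gives $k\le\ell$, contradicting the hypothesis $k>\ell$; so this ratio is excluded. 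Interchanging the eigenvalues replaces a ratio by its inverse, so up to relabelling and complex conjugation only two possibilities survive, $\lambda_1\lambda_2^{-1}=i$ and $\lambda_1\lambda_2^{-1}=\zeta_3$, and I will show the first produces $M_1$ and the second produces $M_2$.

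The reconstruction hinges on a modulus computation for the trace, which is completely forced once the ratio is known. Writing $\lambda_1=\rho\lambda_2$ with $|\lambda_2|=\sqrt2$, we get $|\tr M|=\sqrt2\,|1+\rho|$, which equals $2$ for $\rho=i$ and $\sqrt2$ for $\rho=\zeta_3$. As the diagonal entries $m_{11},m_{22}$ are themselves $\ell\nth$ roots of unity, the extremal case $|m_{11}+m_{22}|=2$ forces $m_{11}=m_{22}$, say equal to an $\ell\nth$ root $a$; while $|m_{11}+m_{22}|=\sqrt2$ forces $\Re(m_{11}\overline{m_{22}})=0$, hence $m_{22}=\pm i\,m_{11}$, and after complex conjugation we may take $m_{11}=a$, $m_{22}=ia$ (which incidentally forces $4\mid\ell$). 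In either case orthogonality of the two rows pins down the off-diagonal entries: solving $m_{11}\overline{m_{21}}+m_{12}\overline{m_{22}}=0$ and setting $b=m_{12}/a$ gives $m_{21}=-ab^{\ast}$ in the first case and $m_{21}=-iab^{\ast}$ in the second, with $b$ an $\ell\nth$ root of unity. This is exactly the assertion that $M$ equals $M_1$ or $M_2$.

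The remaining parametric statements follow by computing the eigenvalues explicitly and then comparing their multiplicative orders prime by prime, and I expect this local bookkeeping to be the main obstacle, since it is a list of fine distinctions rather than a single implication. A direct calculation gives the eigenvalues of $2^{-1/2}M_1$ as $a\zeta_8^{\pm1}$ and those of $2^{-1/2}M_2$ as $a\zeta_{24}^{-1}$ and $a\zeta_{24}^{7}$. Decomposing each eigenvalue, and $\ell$, into its $2$-part, $3$-part and coprime-to-$6$ part, one sees that the two eigenvalues coincide in every local component except the one into which the twisting root $\zeta_8$ (case $M_1$) or $\zeta_3$ (case $M_2$) is injected. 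Since $a$ already lies in $\langle\zeta_\ell\rangle$, the order $k$ can strictly exceed $\ell$ only when that twist raises the relevant $p$-adic valuation above the one present in $\ell$, which is what yields the stated conditions on $\alpha$ (and $\beta$); and the two eigenvalues can fail to share a common order only in a borderline valuation, which is $\alpha=3$ in the first case and $\beta=1$ in the second. The care here lies in tracking these valuations uniformly in $a$ and in checking that the extracted conditions are precisely those compatible with both eigenvalues being primitive $k\nth$ roots of unity for a common $k>\ell$; completing that verification finishes the classification.
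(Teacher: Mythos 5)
Your first two paragraphs are correct and follow essentially the same route as the paper: Corollary~\ref{cor:eigratio} together with Lemma~\ref{traceless} removes the ratio $-1$, and the trace--modulus computation ($|\mathrm{tr}(M)| = \sqrt{2}\,|1+\rho|$, hence $2$ for $\rho = i$ and $\sqrt{2}$ for $\rho=\zeta_3$) combined with row orthogonality reconstructs $M_1$ and $M_2$; your normalized eigenvalues $a\zeta_8^{\pm 1}$ and $a\zeta_{24}^{-1}$, $a\zeta_{24}^{7}$ agree with the paper's. (Your use of complex conjugation to fix $m_{22}=ia$ rather than $-ia$ is at the same level of rigour as the paper's ``up to relabelling and negation''.)

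The genuine gap is your third paragraph: the parametric claims --- $k>\ell$ when $\alpha\le 2$ and equal orders when $\alpha\ne 3$ for $M_1$; $k>\ell$ when $\beta\le 1$ or $\alpha\le 3$ and equal orders when $\beta\ne 1$ and $\alpha\ne 3$ for $M_2$ --- are precisely the content of the theorem, and you never derive them; you only assert that a prime-by-prime valuation analysis ``yields the stated conditions.'' This is where the paper's proof does most of its work: for $M_1$ it splits off the odd part of $a$ (squaring is an automorphism of the odd-order roots, so one may take $a$ of $2$-power order) and runs the case analysis ($a$ a fourth root gives two primitive $8$\nth roots; $a$ a primitive $8$\nth root gives one real and one purely imaginary eigenvalue, hence distinct orders; $a$ of order $2^{\alpha}$, $\alpha\ge 4$, gives $k=\ell$), while for $M_2$ it performs explicit congruence computations mod $9$ when $\beta=1$ and computes the matrix orders when $\alpha=3$. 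Moreover, the heuristic you propose in place of this work is not correct as stated for $M_2$: writing the normalized eigenvalues as $a\zeta_8^{5}\zeta_3$ and $a\zeta_8^{5}\zeta_3^{2}$, the two eigenvalues indeed differ only in their $3$-component, but the factor $\zeta_8^{5}$ \emph{common} to both eigenvalues also shifts the $2$-adic data --- for instance $\alpha=2$, $\beta\ge 2$ gives $k=2\ell$ even though the distinguishing $3$-adic twist raises nothing --- and it is exactly this interaction that produces the conditions on $\alpha$ in item 2. So the claim that ``$k$ can exceed $\ell$ only when that twist raises the relevant $p$-adic valuation'' would steer the bookkeeping to the wrong boundary cases. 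Your local-decomposition framework is viable (it even shows cleanly that the $M_2$ eigenvalue orders can differ only via $\beta=1$, since their $2$- and $t$-parts coincide), but until the case analysis is actually carried out and the stated thresholds $\alpha\le 2$, $\alpha=3$, $\beta=1$, $\alpha\le 3$ are extracted, the theorem has not been proved.
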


\begin{proof}
Throughout we write $\lambda_{1},\lambda_{2}$ for the eigenvalues of $M$, and $m_{ij}$ for the entry in row $i$ and column $j$.
By Corollary \ref{cor:eigratio} and Lemma \ref{traceless}, we may assume that (up to relabelling of eigenvalues and negation of $M$) that
$\lambda_{1}\lambda_{2}^{-1} \in \{i, \zeta_{3}\}$.

\textbf{1:} Suppose $\lambda_{1} = i\lambda_{2}$. Then writing $\lambda_{1} = \sqrt{2} \omega$ for some $\omega \in \mathbb{C}$ of modulus $1$,
we have that $m_{11} + m_{22} = 2\zeta_{8}\omega$, where $\zeta_{8}\omega$.
By the triangle inequality, $a := m_{11} = m_{22}$.  Enforcing the orthogonality of the rows of $M$,
we find that $m_{12} = ab$ and $m_{21} = -ab^{\ast}$ for arbitrary $b$ of modulus $1$.  Thus under the assumption that $M \in \But(2, \ell)$,
we have that $M = M_{1}$. It is easily verified that different choices of $b$ produce similar matrices,
and the eigenvalues of $M_{1}$ are $\sqrt{2}\zeta_{8}a$ and $\sqrt{2}\zeta_{8}^{7} a$.

For convenience, we write $M_{1}(a)$ for the unitary matrix with $2^{-1/2}a$ on the diagonal,
and up to similarity we can take $b = 1$. The group generated by $M_{1}(a)$ is finite and cyclic,
and so a direct product of a cyclic $2$-group and group of odd order.
Since $M_{1}(a)^{4} = -a^{4}I_{2}$ a maximal subgroup of odd order is scalar.
Squaring is an automorphism on the roots of unity of odd order, hence there
exists some root of unity of odd order $\zeta \in \langle \zeta_{\ell}\rangle$
such that $M_{1}(a) = \zeta M_{1}(a')$, where $a'$ is a root of unity of order $2^{\alpha}$.

It is easily verified that when $a' \in \{\pm 1, \pm i\}$ that the eigenvalues of $M_{1}(a')$ are
primitive $8\nth$ roots of unity. But when $a'$ is a primitive $8\nth$ root, then one eigenvalue
of $M_{1}(a')$ is real while the other is purely imaginary. Finally, $a'$ is a primitive root of unity
of order $2^{\alpha}$ for $\alpha \geq 3$, then both eigenvalues of $M_{1}(a')$ are again of order $2^{\alpha}$.

We write $\mathbb{Q}[\zeta_{\ell}]$ for the coefficient field of $\sqrt{2}M_{1}(a)$, where $\ell = 2^{\alpha}t$ and $t$ is odd.
Orthogonality of the rows of $M_{1}(a)$ implies that $\alpha \geq 1$. When $\alpha \in \{1, 2\}$, the eigenvalues of $M_{1}(a)$
are both primitive roots of unity of order $8t$, and so $k > \ell$. When $\alpha = 3$
the eigenvalues have different orders, and in all cases, $M_{1}(a)^{4t} = I_{2}$, while when $\alpha \geq 4$ we always have $k \leq \ell$.

\textbf{2:} Suppose that $\lambda_{1} = \zeta_{3}\lambda_{2}$.
Then the sum of the eigenvalues has modulus $\sqrt{2}$, from which we conclude that $a := m_{11} = im_{22}$.
As before, we can solve for the off diagonal entries in terms of a single unknown, obtaining $m_{12} = ab$ and $m_{21} = -iab^{\ast}$.
Thus under the assumption that $M \in \But(2, \ell)$, we have that $M = M_{2}$. The eigenvalues of $M_{2}$ are $\lambda_{1} = \sqrt{2}\zeta_{24}^{7}a$
and $\lambda_{2} = \sqrt{2}\zeta_{24}^{23}a$.

For convenience, we write $M_{2}(a)$ for the unitary matrix with $2^{-1/2}a$ and $2^{-1/2}ia$ on the diagonal, again up to similarity we can take $b = 1$.  The group generated by $M_{2}(a)$ is finite and cyclic when $a$ is a root of unity, and contains a scalar subgroup of index $3$ generated by $(2^{-3/2}a^{3}(2-2i))I_{2}$.

Suppose now that $a$ is a primitive $\ell\nth$ root of unity where $\ell = 2^{\alpha}3^{\beta}t$, with $\gcd(t, 6) = 1$.
For each choice of $\alpha, \beta, t$, it is routine to compute the orders of the eigenvalues, though there are a large number of
cases to consider. Since $-ia$ is an entry of $\sqrt{2}M_{2}(a)$, we can assume that $\alpha \geq 2$. Next we will show that the eigenvalues of
$M_{2}(a)$ have distinct orders if and only if $\beta = 1$ or $\alpha = 3$.

Suppose that $\beta = 1$: then the eigenvalues of $M_{2}(a)$ are $\zeta_{24\ell}^{24i + 7\ell}$ and $\zeta_{24\ell}^{24i + 23\ell}$ where $i \equiv 1, 2 \mod 3$ and $\ell \equiv 3, 6 \mod 9$ (since $a$ is a primitive $\ell\nth$ root). Suppose that $i \equiv 1 \mod 3$ and $\ell \equiv 3 \mod 9$, then $24i + 7 \ell \equiv 0 \mod 9$ while $24 i + 23 \ell \equiv 3 \mod 9$. But since $9 \mid 24 \ell$, the eigenvalues do not have the same order. The remaining three cases are similar; in no case do the eigenvalues of $M$ have the same order. When $a$ is a primitive $8\nth$ root, $M_{2}(a)$ has order $3, 6$ or $12$ (depending on the choice of primitive root). When $\ell = 2^{3}t$ with $\gcd(t, 6) = 1$ we obtain matrices of orders $3t, 6t, 12t$. And by a similar argument, when $\ell = 2^{3} 3^{\beta} t$ with $\beta \geq 2$ we obtain matrices of order $3^{\beta}t, 2 \cdot 3^{\beta}t,4 \cdot 3^{\beta}t$.

It remains to examine the cases $\alpha = 2$ and $\beta \neq 1$, and $\alpha \geq 4$ with $\beta \neq 1$. In the first case, when $\alpha = 2$ and $\beta = 0$, we have that $a = \zeta_{24}^{j}\zeta_{t}$ for $j \in \{6, 18\}$. So the eigenvalues of $M(a)$ are $\zeta_{24}^{j+7}\zeta_{t}$ and $\zeta_{24}^{j -1}\zeta_{t}$. For each choice of $j$, one finds that both exponents are coprime to 24, and hence the eigenvalues have multiplicative order $24t = 6\ell$. The remaining computations are all similar, in each case one finds that both eigenvalues are primitive roots of order $\lcm(24, \ell)$.
\end{proof}

Suppose that $M = M_{1}$ where $a, b$ are chosen to be $2t\nth$ roots of unity for odd $t$. Then the eigenvalues of $M_{1}$ are $Y = \{\zeta_{8}a, \zeta_{8}^{7}a\}$
and $\sqrt{2}^{1-i}M^{i} \in \But(2, 2t)$ whenever $i$ is odd. Hence the pair $(H, M)$ is sound whenever the entries of $H$ lie in $X = \{ \zeta_{8t}^{2i+1} \mid i \in \mathbb{N}\}$.
We have recovered the generalisation of Turyn's morphism described in Theorem \ref{thm:newfromold}. When $\alpha \geq 3$, then $k \leq \ell$, and we never obtain an interesting morphism. When $\alpha = 2$, we obtain a partial morphism with the same domain as when $\alpha = 1$; but the image lies over a larger field. These are the only non-trivial morphisms obtained from $M_{1}$.

We can also examine the morphisms obtained from $M_{2}$: an interesting case occurs when $a = b = 1$, denote this matrix by $M_{24}$. Its entries are fourth roots,
and its eigenvalues are primitive $24\nth$ roots. It is easily verified that $X = \{\zeta_{24}^{3i+j} \mid i \in \mathbb{N}, j = 1, 2\}$ and
$Y = \{ \zeta_{24}^{7}, \zeta_{24}^{23} \}$. The set $X$ consists of all $24\nth$ roots which are not $8\nth$ roots.
We obtain the obvious partial morphism $\But(n, 24)\rightarrow \But(n, 4)$. Since $X$ contains an arithmetic progression of length $8$,
we have that the pair $(\zeta_{24}H, M_{24})$ is sound for any $H \in \But(n, 8)$. Hence we obtain a complete morphism $\But(n, 8) \rightarrow \But(2n, 4)$
which does not seem to have appeared previously in the literature.

\subsection*{Acknowledgements}

The first author has been fully supported by Croatian Science Foundation under the project 1637.
The argument for Proposition \ref{prop:roots} was provided by Richard Stanley in response to a question posted on MathOverflow.

\bibliographystyle{abbrv}
\flushleft{
\bibliography{NewBiblio}
}
\end{document}